\newcommand{\newsection}[1]{\setcounter{equation}{0} \section{#1}}
\newcommand{\bea}{\begin{eqnarray}}
\newcommand{\eea}{\end{eqnarray}}
\newcommand{\cla}{\mathcal{A}}
\newcommand{\clb}{\mathcal{B}}
\newcommand{\cle}{\mathcal{E}}
\newcommand{\clf}{\mathcal{F}}
\newcommand{\clg}{\mathcal{G}}
\newcommand{\clh}{\mathcal{H}}
\newcommand{\cll}{\mathcal{L}}
\newcommand{\clm}{\mathcal{M}}
\newcommand{\clo}{\mathcal{O}}
\newcommand{\cls}{\mathcal{S}}
\newcommand{\clw}{\mathcal{W}}
\newcommand{\z}{\bm{z}}
\newcommand{\w}{\bm{w}}
\newcommand{\raro}{\rightarrow}
\def \qed {\hfill \vrule height6pt width 6pt depth 0pt}
\def\textmatrix#1&#2\\#3&#4\\{\bigl({#1 \atop #3}\ {#2 \atop #4}\bigr)}
\def\dispmatrix#1&#2\\#3&#4\\{\left({#1 \atop #3}\ {#2 \atop #4}\right)}
\newcommand{\be}{\begin{equation}}
\newcommand{\ee}{\end{equation}}
\newcommand{\ben}{\begin{eqnarray*}}
\newcommand{\een}{\end{eqnarray*}}
\newcommand{\NI}{\noindent}
\newcommand{\bi}{\begin{itemize}}
\newcommand{\ei}{\end{itemize}}
\newtheorem{Theorem}{\sc Theorem}[section]
\newtheorem{Proposition}[Theorem]{\sc Proposition}
\newtheorem{Corollary}[Theorem]{\sc Corollary}
\newtheorem{Definition}[Theorem]{\sc Definition}
\newtheorem{Example}[Theorem]{\sc Example}
\newtheorem{Remark}[Theorem]{\sc Remark}
\theoremstyle{plain}
\theoremstyle{definition}
\numberwithin{equation}{section}
\begin{document}

\title[Dilations, Wandering subspaces, and inner functions]{Dilations, Wandering subspaces, and inner functions}

\author[Bhattacharjee]{M. Bhattacharjee}
\address{Indian Institute of Science, Department of Mathematics, Bangalore, 560012, India}
\email{monojit12@math.iisc.ernet.in}

\author[Eschmeier]{J. Eschmeier}
\address{Fachrichtung Mathematik, Universit\"{a}t des Saarlandes, Postfach 151150, D-66041 Saarbr\"{u}cken, Germany}
\email{eschmei@math.uni-sb.de}

\author[Keshari]{Dinesh K. Keshari}
\address{Indian Statistical Institute, Statistics and Mathematics Unit, 8th Mile, Mysore Road, Bangalore, 560059, India}
\email{dinesh@isibang.ac.in}

\author[Sarkar]{Jaydeb Sarkar}
\address{Indian Statistical Institute, Statistics and Mathematics Unit, 8th Mile, Mysore Road, Bangalore, 560059, India}
\email{jay@isibang.ac.in, jaydeb@gmail.com}

\subjclass[2010]{Primary 47A13; Secondary 46E22, 47A15, 47A20, 47A45
47A45}


\keywords{Dilations, joint invariant subspaces, wandering subspaces,
Drury-Arveson space, reproducing kernel Hilbert spaces, multipliers,
inner functions}

\begin{abstract}
The objective of this paper is to study wandering subspaces for
commuting tuples of bounded operators on Hilbert spaces. It is shown
that, for a large class of analytic functional Hilbert spaces $\clh_K$ on the
unit ball in $\mathbb C^n$, wandering subspaces for restrictions of
the multiplication tuple $M_z = (M_{z_1}, \ldots ,M_{z_n})$ can be
described in terms of suitable $\clh_K$-inner functions. We prove that 
$\clh_K$-inner functions are contractive multipliers and deduce a result
on the multiplier norm of quasi-homogenous polynomials as an application.
Along the way we prove a refinement of a result of Arveson on the uniqueness of
minimal dilations of pure row contractions.
\end{abstract}

\maketitle









\newsection{Introduction}

Let $T = (T_1, \ldots, T_n)$ be an $n$-tuple of commuting bounded
linear operators on a complex Hilbert space $\clh$. A closed subspace $\clw \subset \clh$ is called 
a \textit{wandering subspace} for $T$ if
\[
\mathcal{W}\perp T^{\bm{k}} \mathcal{W} \quad \quad (\bm{k} \in
\mathbb{N}^n\setminus \{\bm{0}\}).
\]
We say that $\clw$ is a \textit{generating wandering subspace} for $T$ if in addition
\[
\clh = \overline{\mbox{span}}\{T^{\bm{k}} \clw : \bm{k} \in \mathbb{N}^n\}.
\]

Wandering subspaces were defined by Halmos in \cite{PH}.
One of the main observations from \cite{PH} is the following.
Let $\cle$ be a Hilbert space and let $M_z: H^2_{\cle}(\mathbb{D}) \rightarrow H^2_{\cle}(\mathbb{D})$ be
the operator of multiplication with the argument on the $\cle$-valued Hardy space $H^2_{\cle}(\mathbb{D})$
on the unit disc $\mathbb D$. Suppose that $\cls$ is a non-trivial closed $M_z$-invariant subspace of
$H^2_{\cle}(\mathbb{D})$. Then
\[
\clw = \cls \ominus z \cls
\]
is a wandering subspace for $M_z|_{\cls}$ such that
\[
M_z^p \clw \perp M_z^q \clw
\]
for all $p \neq q $ in $\mathbb{N}$ and
\[
\cls = \overline{\mbox{span}}\{ z^m \clw: m \in \mathbb{N}\}.
\]
Hence
\[
\cls = \mathop{\oplus}_{m=0}^\infty z^m\clw
\]
and up to unitary equivalence
\[
M_z|_{\cls} \;\mbox{on}\; \cls \cong M_z \; \mbox{on}\; H^2_{\clw}(\mathbb{D}).
\]
In particular, we have $\cls = V (H^2_{\clw}(\mathbb{D}))$, where $V
: H^2_{\clw}(\mathbb{D}) \raro H^2_{\cle}(\mathbb{D})$ is an
isometry and $V M_z = M_z V$.  One can show (see Lemma V.3.2 in \cite{NF} for details
and more precise references)
that any such intertwining isometry $V$ acts as the
multiplication operator $V = M_{\Theta}: H^2_{\clw}(\mathbb{D}) \rightarrow H^2_{\cle}(\mathbb{D}),$
$f \mapsto \Theta f,$ with a bounded analytic function
$\Theta \in H^\infty_{\clb(\clw, \cle)}(\mathbb{D})$ such that $\Theta$ possesses isometric
boundary values almost everywhere. In this case
\[
\cls = \Theta H^2_{\clw}(\mathbb{D})
\]
and (cf. Theorem \ref{inv-tuple} below)
\begin{equation}\label{paramet}
\cls \ominus z \cls = \Theta \clw.
\end{equation}
Thus the wandering subspaces of $M_z$
on $H^2_{\cle}(\mathbb{D})$ can be described using the
Beurling-Lax-Halmos representation of $M_z$-invariant subspaces
of $H^2_{\cle}(\mathbb{D})$.

Much later,  in Aleman, Richter and Sundberg \cite{ARS}, it was
shown that every $M_z$-invariant closed subspace of the Bergman space on the
unit disc $\mathbb D$ is
generated by a wandering subspace. More precisely, let $\cls$ be a closed
$M_z$-invariant subspace of the Bergman space $L^2_a(\mathbb{D})$.
Then
\[
\cls = [\cls \ominus z \cls],
\]
where the notation $[M]$ is used for the smallest closed $M_z$-invariant
subspace containing a given set $M \subset L^2_a(\mathbb{D})$. The above
result
of Aleman, Richter and Sundberg has been extended by Shimorin
(see \cite{SS1} and \cite{SS2}) who replaced the multiplication operator
$M_z$ on the Bergman space by left invertible Hilbert space operators 
satisfying suitable operator inequalities.


In this paper we study wandering subspaces
for commuting tuples of operators on Hilbert spaces. More precisely,
let $T = (T_1, \ldots, T_n)$ be a commuting tuple of bounded operators on a
Hilbert space $\clh$. Suppose that  $\clw = \clh \ominus \sum_{i=1}^n T_i
\clh$ is a generating wandering subspace for $T$. We are
interested in the following general question: given a closed
$T$-invariant subspace $\cls \subset \clh$, are there natural conditions which
ensure that $T|_{\cls} = (T_1|_{\cls} , \ldots , T_n|_{\cls})$ has a generating wandering subspace again?

In view of the known one-variable results it seems natural to study this problem
first in the particular case where $T$ is the tuple
$M_z = (M_{z_1}, \ldots ,M_{z_n})$ consisting of the multiplication operators with
the coordinate functions on some classical reproducing kernel Hilbert spaces such as
the Hardy space, the Bergman space or the Drury-Arveson
space on the unit ball $\mathbb{B}^n$ of $\mathbb C^n$.

The main purpose of this paper, however, is to parameterize the
wandering subspaces and, in particular, to extend the representation
(\ref{paramet}) to a large class of commuting operator tuples. The
above question concerning the existence of generating wandering
subspaces, even for classical reproducing kernel Hilbert spaces over
the unit ball in $\mathbb{C}^n$, seems to be more elusive.

Our primary motivation for studying wandering subspaces comes from
recent results on Beurling-Lax-Halmos type representations of
invariant subspaces of commuting tuples of operators (see Theorem
\ref{inv} below or \cite{BB1} and \cite{JS1}). Our study is also
motivated by Hedenmalm's theory \cite{HH} of Bergman inner functions
for shift-invariant subspaces of the Bergman space on the unit disc
$\mathbb{D}$. This concept has been further generalized by Olofsson
\cite{O2,O3} to obtain parameterizations of wandering subspaces of
shift-invariant subspaces for the weighted Bergman spaces on $\mathbb D$
corresponding to the kernels
\[
K_m(z,w) = (1 - z \bar{w})^{-m} \quad (m \in \mathbb N).
\]

Our observations heavily depend on the
existence of dilations for commuting row contractions (see Section
\ref{dilation}).
For instance, let $T = (T_1, \ldots, T_n)$ be a pure
commuting contractive tuple on a Hilbert space $\clh$.
Let $\Pi :\clh \raro H^2_n(\cle)$ be the Arveson
dilation of $T$, and let $\tilde{\Pi} : \clh \raro H^2_n(\tilde{\cle})$ be an
arbitrary dilation of $T$ (see Section \ref{dilation}). Then our main
uniqueness result, which may be of independent interest, yields
an isometry $V : \cle \raro \tilde{\cle}$ such that the following
diagram commutes (Corollary \ref{u-dil2}):
\vspace{-.5cm}

 \setlength{\unitlength}{3mm}
 \begin{center}
 \begin{picture}(40,16)(0,0)
\put(15,3){$\clh$}\put(19,1.6){$\tilde{\Pi}$}
\put(22.9,3){$H^2_n(\tilde{\cle})$} \put(22, 10){$H^2_n(\cle)$}
\put(22.8,9.2){ \vector(0,-1){5}} \put(15.8, 4.3){\vector(1,1){5.8}}
\put(16.4,
3.4){\vector(1,0){6}}\put(16.5,8){$\Pi$}\put(24.3,7){$I_{H^2_n}
\otimes V$}
\end{picture}
\end{center}

In the one-dimensional case $n = 1$, some of our observations concerning wandering subspaces
are closely related to results of Shimorin \cite{SS1,
SS2}, Ball and Bolotnikov \cite{BB1, BB2, BB3} and Olofsson
\cite{O1, O2, O3}.

In Section 2 we define the notion of a minimal dilation for pure
commuting row contractions $T$ and show that the Arveson dilation is
a minimal dilation of $T$. In Section 3 we show that minimal dilations
are uniquely determined and that each dilation of a pure commuting
row contraction factorizes through its minimal dilation. If $\cls$ is a
closed invariant subspace for $T$, then by
dualizing the minimal dilation of the restriction $T|_{\cls}$ one obtains
a representation of $\cls$ as the image of a partially isometric module
map $\Pi: H^2_n(\cle) \rightarrow H$ defined on a vector-valued Drury-Arveson
space. In Section 4 the uniqueness and factorization results for minimal
dilations are used to prove corresponding results for the representation $\Pi$.
In Section 5 we show that any representation $\Pi: H^2_n(\cle) \rightarrow H$
of a $T$-invariant subspace $\cls$ induces a unitary representation of the
associated wandering subspace $\clw = \cls \ominus \sum^n_{i=1} T_i \cls$.
Finally, in Section 6 we show that, in the particular case that 
$T \in \clb(\clh)^n$ is the the multiplication tuple $M_z =(M_{z_1}, \ldots ,M_{z_n})$ 
on a contractive analytic functional Hilbert space $H(K)$ on $\mathbb B^n$, 
the above representation of the wandering subspace $\clw = \cls \ominus \sum^n_{i=1} M_{z_i} \cls$ 
is given by a suitably defined $H(K)$-inner function. We show that $H(K)$-inner
functions are contractive multipliers and apply these results to deduce that the
norm and the multiplier norm for quasi-homogeneous polynomials on the Drury-Arveson
space coincide. We conclude with an example showing that in contrast to the one-dimensional 
case in dimension $n > 1$, even for the nicest
analytic functional Hilbert spaces on $\mathbb B^n$ such as the Hardy space, the Bergman space or
the Drury-Arveson space, there
are $M_z$-invariant subspaces which do not possess a generating wandering 
subspace.


The authors are grateful to Orr Shalit for helpful discussions.

\section{Minimal Dilations}\label{dilation}

We begin with a brief introduction to the theory of dilations for
commuting row contractions.

Let $T=(T_1,\ldots,T_n)$ be an $n$-tuple of bounded linear operators
on a complex Hilbert space $\mathcal{H}$. We denote by $P_T : \mathcal{B}(\mathcal{H})\to
\mathcal{B}(\mathcal{H})$ the completely positive map defined by
\[
P_{T}(X)=\sum_{i=1}^{n}T_iX T_i^*.
\]
An $n$-tuple $T$ is called a \textit{row contraction} or simply a
\textit{contractive tuple} if
\[
P_T(I_{\clh}) \leq I_{\clh}.
\]
If $T$ is a row contraction, then
\[
I_{\mathcal{H}}\geq
P_{T}(I_{\mathcal{H}})\geq P_{T}^2(I_{\mathcal{H}})\geq \cdots \geq
P_{T}^m(I_{\mathcal{H}})\geq \cdots \geq 0.
\]
Hence the limit
\[P_{\infty}(T) = {\rm SOT-}\lim_{m \raro \infty} P_{T}^m(I_{\mathcal{H}})
\]
exists and satisfies the inequalities $0\leq P_{\infty}(T)\leq
I_{\mathcal{H}}$. A row contraction $T$ is called \textit{pure} (cf. \cite{Ar} or \cite{Pop})
if $P_{\infty}(T)=0$.

Let $\lambda \geq 1$ and let $K_\lambda : \mathbb{B}^n \times
\mathbb{B}^n \raro \mathbb{C}$ be the positive definite function
defined by
\[
K_\lambda(\z, \w) = (1 - \sum_{i=1}^n z_i \bar{w}_i)^{-\lambda}.
\]
Then the Drury-Arveson space $H^2_n$,
the Hardy space $H^2(\mathbb{B}^n)$, the Bergman space
$L^2_a(\mathbb{B}^n)$, and the weighted Bergman spaces $L^2_{a,
\alpha}(\mathbb{B}^n)$ with $\alpha > -1$, are the reproducing kernel
Hilbert spaces with kernel $K_{\lambda}$ where $\lambda = 1, n$, $n+1$
and $n + 1 + \alpha$, respectively. The tuples $M_z = (M_{z_1}, \ldots , M_{z_n})$
of multiplication operators with the coordinate functions
on these reproducing kernel Hilbert spaces are examples of
pure commuting contractive tuples of operators.

Let $K$ be a positive definite function on
$\mathbb{B}^n$ holomorphic in the first and anti-holomorphic in the
second variable. Then the functional Hilbert space $\clh_K$
with reproducing kernel $K$ consists of analytic functions on $\mathbb B^n$. For any Hilbert space $\cle$, the
$\cle$-valued functional Hilbert space with reproducing kernel
\[
\mathbb B^n \times \mathbb B^n \raro \clb(\cle), \; (\z, \w) \mapsto K(\z,\w) I_{\cle}
\]
can canonically be identified with the tensor product Hilbert
space $\clh_K \otimes \cle$. To simplify the notation, we
often identify $H^2_n \otimes \cle$ with the $\cle$-valued
Drury-Arveson space $H^2_n(\cle)$.

Let $T$ be a commuting row contraction on $\clh$ and let $\cle$ be
an arbitrary Hilbert space. An isometry $\Gamma : \clh \raro
H^2_n(\cle)$ is called a \textit{dilation} of $T$ if
\[
M_{z_i}^* \Gamma =  \Gamma T_i^* \quad \quad (i = 1, \ldots, n).
\]
Since $M_z \in \clb(H^2_n(\cle))^n$ is a pure row contraction and since 
a compression of a pure row contraction to a co-invariant subspace remains
pure, any commuting row contraction possessing a dilation of the above
type is necessarily pure.

Let $\Gamma : \clh \raro H^2_n(\cle)$ be a dilation of $T$.
Since the $C^*$-subalgebra of $\clb(H^2_n)$
generated by $(M_{z_1}, \ldots, M_{z_n})$ has the form
\[
C^*(M_z) = \overline{\mbox{span}} \{M_z^{\bm{k}} M_z^{*\bm{l}} :
\bm{k}, \bm{l} \in \mathbb{N}^n\}
\]
(see Theorem 5.7 in \cite{Ar}), the space
\[
M = \overline{{\rm span}} \{z^{\bm{k}} \Gamma h : \bm{k} \in \mathbb{N}^n, h \in \clh\}
\]
is the smallest reducing subspace for $M_z$ on $H^2_n(\cle)$ containing the image of
$\Gamma$. As a reducing subspace for $M_z \in \clb(H^2_n(\cle))^n$ the space $M$ has the form
\[
M = \bigvee_{{\bm k} \in \mathbb N^n} z^{{\bm k}} \cll = H^2_n(\cll) \quad {\rm with} \quad \cll = M \cap \cle.
\]
We call $\Gamma$ a \textit{minimal dilation} of $T$ if
\[
H^2_n(\cle) = \overline{{\rm span}} \{z^{\bm{k}} \Gamma h : \bm{k} \in
\mathbb{N}^n, h \in \clh\}.
\]

We briefly recall a canonical way to construct minimal dilations for
pure commuting contractive tuples (cf. \cite{Ar}). Let $T$ be a pure
commuting contractive tuple on $\clh$. Define
\[
\cle_c = \overline{{\rm ran}}(I_{\clh} - P_T(I_{\clh})), \quad \quad \quad D =
(I_{\clh} - P_T(I_{\clh}))^{\frac{1}{2}}.
\]
Then the operator $\Pi_c : \clh \raro H^2_n(\cle_c)$ defined by
\[
(\Pi_c h)(\bm{z}) = D (I_{\clh} - \sum_{i=1}^n z_i T_i^*)^{-1}h \quad \quad (\bm{z} \in \mathbb{B}^n, h \in \clh).
\]
is a dilation of $T$. Let
\[
M = \bigvee_{{\bm k} \in \mathbb N^n} z^{{\bm k}} \cll = H^2_n(\cll) \quad {\rm with} \quad \cll = M \cap \cle_c
\]
be the smallest reducing subspace for $M_z \in \clb(H^2_n(\cle_c))^n$ which contains the image of $\Pi_c$ and let
$P_{\cle_c}$ be the orthogonal projection of $H^2_n(\cle_c)$ onto
the subspace consisting of all constant functions. Since
\[
P_{\cle_c} =I_{H^2_n(\cle_c)} - \sum_{i=1}^n M_{z_i} M_{z_i}^*,
\]
we obtain that
\[
Dh = P_{\cle_c}(\Pi_c h) \in H^2_n(\cll) \cap \cle_c = \cll
\]
for each $h \in \clh$. Hence $\Pi_c$ is a minimal dilation of $T$.
\vspace{.5cm}

\newsection{uniqueness of minimal dilations}

Using a refinement of an idea of Arveson \cite{Ar}, we obtain the following sharpened
uniqueness result for minimal dilations of pure commuting contractive tuples.

\begin{Theorem}\label{u-dil}
Let $T \in \clb(\clh)^n$ be a pure commuting contractive tuple on a Hilbert space $\clh$
and let $\Pi_i : \clh \raro H^2_n({\cle_i})$, $i = 1, 2$, be a pair of minimal
dilations of $T$. Then there exists a unitary operator $U \in \clb(\cle_1,
\cle_2)$ such that
\[\Pi_2 = (I_{H^2_n} \otimes U) \Pi_1.\]
\end{Theorem}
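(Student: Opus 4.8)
The plan is to compare the two minimal dilations by first passing through the canonical dilation $\Pi_c$ constructed before the theorem, so it suffices to prove the statement in the special case $\Pi_1 = \Pi_c$ (or, alternatively, to produce directly a unitary from $\cle_1$ to $\cle_2$). Here is the more self-contained route. Since each $\Pi_i$ is an isometry intertwining the adjoints $T_j^*$ with $M_{z_j}^*$, and since $M_z$ on $H^2_n(\cle_i)$ is a pure row contraction, one has the explicit formula $(\Pi_i h)(\z) = P_{\cle_i}\big(I - \sum_j z_j M_{z_j}^*\big)^{-1}\Pi_i h = D_i (I - \sum_j z_j T_j^*)^{-1} h$, where $D_i h = P_{\cle_i}(\Pi_i h)$ is the compression to the constants. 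The key first step is to observe that $D_i : \clh \to \cle_i$ satisfies $D_i^* D_i = I_\clh - P_T(I_\clh)$; indeed $\Pi_i^* \Pi_i = I_\clh$ together with $P_{\cle_i} = I - \sum_j M_{z_j} M_{z_j}^*$ on $H^2_n(\cle_i)$ and the intertwining relations give $D_i^* D_i = \Pi_i^*(I - \sum_j M_{z_j}M_{z_j}^*)\Pi_i = I_\clh - \sum_j T_j T_j^* = I_\clh - P_T(I_\clh)$. Consequently the map $D_1 h \mapsto D_2 h$ extends to a well-defined isometry $W_0 : \overline{\mathrm{ran}}\, D_1 \to \overline{\mathrm{ran}}\, D_2$.

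The second step is to promote $W_0$ to a unitary $U : \cle_1 \to \cle_2$. This is exactly where minimality enters. By the minimality hypothesis, $H^2_n(\cle_i) = \overline{\mathrm{span}}\{z^{\bm k}\Pi_i h\}$, and as recorded in the excerpt this forces $\cle_i = M \cap \cle_i$ with $\cll = M \cap \cle_i$, i.e. the constants coming from the image of $\Pi_i$ span all of $\cle_i$; more precisely $P_{\cle_i}(\text{image of }\Pi_i) = \overline{\mathrm{ran}}\, D_i$ is dense in $\cle_i$ (this uses $P_{\cle_i} = I - \sum_j M_{z_j}M_{z_j}^*$ and the fact that applying $M_{z_j}^*$ to $z^{\bm k}\Pi_i h$ again lands in the span of such vectors via the intertwining relation, so the constant parts of all these vectors already span $\cle_i$). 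Hence $\overline{\mathrm{ran}}\, D_i = \cle_i$, and $W_0$ is a unitary $U \in \clb(\cle_1,\cle_2)$ with $U D_1 = D_2$.

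The third step is to check $\Pi_2 = (I_{H^2_n}\otimes U)\Pi_1$. Using the power series expansion, for $h \in \clh$ and $\z \in \mathbb B^n$,
\[
(\Pi_i h)(\z) = D_i\Big(I - \sum_{j=1}^n z_j T_j^*\Big)^{-1} h = \sum_{\bm k \in \mathbb N^n} c_{\bm k}\, \z^{\bm k}\, D_i\, T^{*\bm k} h
\]
for universal scalar coefficients $c_{\bm k}$ (the Drury--Arveson multinomial weights), so $\Pi_i$ is completely determined by $D_i$. Applying $I_{H^2_n}\otimes U$ to $\Pi_1 h$ replaces each coefficient $D_1 T^{*\bm k}h$ by $U D_1 T^{*\bm k} h = D_2 T^{*\bm k} h$, which is precisely the coefficient of $\Pi_2 h$. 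Therefore $(I_{H^2_n}\otimes U)\Pi_1 = \Pi_2$, as required.

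I expect the main obstacle to be the surjectivity in the second step, i.e. verifying carefully that minimality really gives $\overline{\mathrm{ran}}\, D_i = \cle_i$ rather than merely a reducing-subspace reduction; one must argue that the constant-function parts of $\{M_z^{*\bm l} z^{\bm k}\Pi_i h\}$ already exhaust $\cle_i$, which follows from expanding $z^{\bm k}\Pi_i h$, applying suitable $M_{z_j}^*$, and using $M_{z_j}^*\Pi_i = \Pi_i T_j^*$ to see all the relevant constants are of the form $D_i(\text{something})$. Everything else is a direct computation with the intertwining relations and the explicit Drury--Arveson kernel.
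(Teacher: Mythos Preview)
Your argument is correct and genuinely different from the paper's. The paper proceeds abstractly: it interprets each minimal dilation as a minimal Stinespring dilation of a common $\cla$-morphism $\varphi:C^*(M_z)\to\clb(\clh)$, invokes Arveson's uniqueness lemma to obtain a unitary $W:H^2_n(\cle_1)\to H^2_n(\cle_2)$ intertwining $M_z\otimes I_{\cle_1}$ and $M_z\otimes I_{\cle_2}$, and then uses the multiplier commutant lifting theorem to show $W=I_{H^2_n}\otimes U$ for a constant unitary $U$. Your route is concrete: you first establish the Poisson-type formula $(\Pi_i h)(\z)=D_i(I-\sum_j z_jT_j^*)^{-1}h$ valid for \emph{any} dilation, observe that $D_i^*D_i=I_\clh-P_T(I_\clh)$ is independent of $i$, and then build $U$ directly on the coefficient spaces. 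This avoids Stinespring theory and commutant lifting entirely and is more elementary and constructive; the paper's approach, on the other hand, situates the result within Arveson's general $C^*$-dilation framework.

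Your identification of the only delicate point is accurate, but the justification you sketch for $\overline{\mathrm{ran}}\,D_i=\cle_i$ is more roundabout than necessary. A clean version: if $\eta\in\cle_i\ominus\overline{\mathrm{ran}}\,D_i$, then $\langle\eta,\Pi_i h\rangle=\langle\eta,D_i h\rangle=0$ for all $h$ (since $\eta$ is constant), and $\langle\eta,z^{\bm k}\Pi_i h\rangle=\langle M_z^{*\bm k}\eta,\Pi_i h\rangle=0$ for $\bm k\neq 0$ because $M_{z_j}^*\eta=0$; hence $\eta\perp\bigvee_{\bm k}z^{\bm k}\Pi_i\clh=H^2_n(\cle_i)$ by minimality, so $\eta=0$. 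Equivalently, apply the continuous projection $P_{\cle_i}$ to the dense span $\bigvee_{\bm k}z^{\bm k}\Pi_i\clh$ and note $P_{\cle_i}(z^{\bm k}\Pi_i h)=0$ for $\bm k\neq 0$.
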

\NI\textsf{Proof.}
As an application of Theorem 8.5 in \cite{Ar} one can show that there is a
unitary operator $W: H^2_n(\cle_1) \rightarrow H^2_n(\cle_2)$ which intertwines
the tuples $M_z$ on $H^2_n(\cle_1)$ and $H^2_n(\cle_2)$.
We prefer to give a more direct proof containing some
simplifications which are possible in the pure case.

Let $\clb = C^*(M_z) \subset \clb(H^2_n)$ be the $C^*$-algebra
generated by the multiplication tuple $(M_{z_1}, \ldots,
M_{z_n})$ on the scalar-valued Drury-Arveson space $H^2_n$.
Denote by $\mathcal{A}$ the unital subalgebra of $\clb$ consisting of all
polynomials in $(M_{z_1}, \ldots, M_{z_n})$.

\NI Let $\Pi : \clh \raro H^2_n(\clf)$ be a minimal dilation of
$T$. The map $\varphi : \clb \raro \clb(\clh)$ defined by
\[
\varphi(X) = \Pi^* (X \otimes I_{\clf}) \Pi
\]
is completely positive and unital. For each polynomial
$p \in \mathbb{C}[z_1, \ldots, z_n]$ and each operator $X \in \clb$,
we have $\varphi(p(M_z)) = p(T)$ and, since ${\rm ran}(\Pi)^{\perp}$ is
invariant for $M_z$, it follows that
\[
\varphi(p(M_z) X) = \Pi^* (p(M_z)X \otimes 1_{\clf}) \Pi = \varphi(p(M_z))\varphi(X).
\]
Hence $\varphi : \clb \raro \clb(\clh)$ is an $\cla$-morphism in the sense
of Arveson (Definition 6.1 in \cite{Ar}). The minimality of $\Pi$ as a dilation
of $T$ implies that the map $\pi : \clb \raro \clb(H^2_n(\clf))$ defined by
\[
\pi(X) = X \otimes I_{\clf}
\]
is a minimal Stinespring dilation of $\varphi$.
Now let $\Pi_i : \clh \raro H^2_n({\cle_i})$, $i = 1, 2$, be a pair
of minimal dilations of $T$ and let $\varphi_i : \clb \raro \clb(\clh), \;
\pi_i : \clb \raro \clb(H^2_n(\cle_i))$ be the maps induced by $\Pi_i$ as
explained above. Then $\varphi_1 = \varphi_2$ on $\cla$, and hence a
direct application of Lemma 8.6 in \cite{Ar} shows that there is a
unitary operator $W:  H^2_n(\cle_1) \raro H^2_n(\cle_2)$  such that
$W (X \otimes I_{\cle_1}) = (X \otimes I_{\cle_2}) W$ for all $X \in \clb$
and such that
\[
W \Pi_1 = \Pi_2.
\]
Since $W$ and $W^*$ both intertwine the tuples $M_z \otimes I_{\cle_1}$
and $M_z \otimes I_{\cle_2}$, it follows as a very special case of the
functional commutant lifting theorem for the Drury-Arveson space
(see Theorem 5.1 in \cite{BTV} or Theorem 3.7 in \cite{AE}) that
$W$ and $W^*$
are induced by multipliers, say $W = M_a$  and $W^* = M_b$, where
$a: \mathbb{B}^n \raro \clb(\cle_1,\cle_2)$ and $b: \mathbb{B}^n \raro
\clb(\cle_2,\cle_1)$ are operator-valued multipliers.
Since
\[
M_{ab} = M_a M_b = I_{H^2_n(\cle_2)} \quad \mbox{and} \quad M_{ba} = M_b M_a =
I_{H^2_n(\cle_1)},
\]
it follows that $a(\bm{z}) = b(\bm{z})^{-1}$ are
invertible operators for all $\bm{z} \in \mathbb{B}^n$. Moreover, since
\[
K(\z,\w) \eta = (M_a M^*_a K(\cdot, \w) \eta) (\z) = a(\z) K(\z, \w) a(\w)^* \eta,
\]
holds for all $\z, \w \in \mathbb{B}^n$ and $\eta \in \cle_2$, it follows that
$a(\z) a(\w)^* = I_{\cle_2}$ for all $\z, \w \in \mathbb{B}^n$. Hence
the operators $a(\z)$ are unitary with
$a(\z) = a(\w)$ for all $\z,\w \in \mathbb{B}^n$. Let $U = a(\z)$ be the constant
value of the multiplier $a$. Then $U \in \clb(\cle_1,\cle_2)$ is a unitary operator with
$W = I_{H^2_n} \otimes U$. Hence the proof is complete.
\qed

\begin{Remark}\label{canonical}
Let $\Pi_c : \clh \raro H^2_n(\cle_c)$ be the Arveson dilation of a
pure commuting row contraction  $T$ as in Section \ref{dilation} and
let $\Pi : \clh \raro H^2_n(\cle)$ be an arbritrary minimal dilation of $T$.
Then there is a unitary operator $U \in \clb(\cle_c,\cle)$ such that
\[
\Pi = (I_{H^2_n} \otimes U) \Pi_c.
\]
In this sense the Arveson dilation $\Pi_c$ is the unique minimal dilation of $T$.
In the sequel we call $\Pi_c$ the {\rm canonical dilation} of $T$.
\end{Remark}

As a consequence of Theorem \ref{u-dil}, we obtain the following
factorization result (see Theorem 4.1 in \cite{JS3} for the
single-variable case).

\begin{Corollary}\label{u-dil2}\textsf{(Canonical factorizations of dilations)}
Let $T \in \clb(\clh)^n$ be a pure commuting contractive tuple and let ${\Pi} :
\clh \raro H^2_n({\cle})$ be a dilation of $T$. Then there exists an
isometry $V \in \clb(\cle_c, {\cle})$ such that \[ {\Pi} =
(I_{H^2_n} \otimes V) \Pi_c.\]
\end{Corollary}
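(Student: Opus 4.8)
The plan is to reduce to the minimal case already settled in Remark~\ref{canonical}. Given the dilation $\Pi : \clh \raro H^2_n(\cle)$, let $M \subseteq H^2_n(\cle)$ be the smallest reducing subspace for the tuple $M_z = (M_{z_1},\ldots,M_{z_n})$ containing ${\rm ran}(\Pi)$, that is,
\[
M = \overline{\rm span}\{z^{\bm{k}} \Pi h : \bm{k} \in \mathbb{N}^n,\ h \in \clh\}.
\]
As recalled in Section~\ref{dilation}, any reducing subspace for $M_z$ on $H^2_n(\cle)$ has the form $M = H^2_n(\cll)$ with $\cll = M \cap \cle$, and under this identification the inclusion $H^2_n(\cll) \hookrightarrow H^2_n(\cle)$ is precisely the ampliation $I_{H^2_n} \otimes J$ of the isometric inclusion $J : \cll \hookrightarrow \cle$.

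First I would check that $\Pi$, now regarded as a map $\clh \raro H^2_n(\cll)$, is a \emph{minimal} dilation of $T$. Minimality is immediate from the very definition of $M$, since $\overline{\rm span}\{z^{\bm k}\Pi h : \bm k \in \mathbb N^n,\ h\in\clh\} = M = H^2_n(\cll)$. For the intertwining relations, note that $M$ is reducing for each $M_{z_i}$ on $H^2_n(\cle)$ and ${\rm ran}(\Pi) \subseteq M$; hence the restriction $M_{z_i}|_M$ lies in $\clb(M)$, its adjoint is $M_{z_i}^*|_M$, and under the identification $M = H^2_n(\cll)$ this operator $M_{z_i}|_M$ is the multiplication operator $M_{z_i}$ on $H^2_n(\cll)$. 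Therefore, for $h \in \clh$, we get $(M_{z_i}|_M)^*\,\Pi h = M_{z_i}^*\,\Pi h = \Pi T_i^* h$, so $\Pi$ satisfies the defining intertwining identities of a dilation into $H^2_n(\cll)$.

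Next I would apply Remark~\ref{canonical} (i.e., Theorem~\ref{u-dil}) to the minimal dilation $\Pi : \clh \raro H^2_n(\cll)$ and the canonical dilation $\Pi_c : \clh \raro H^2_n(\cle_c)$: there is a unitary operator $U \in \clb(\cle_c,\cll)$ with $\Pi = (I_{H^2_n}\otimes U)\Pi_c$. Set $V = JU \in \clb(\cle_c,\cle)$; as the composition of a unitary and an isometry, $V$ is an isometry. Composing with the inclusion and using functoriality of the ampliation $I_{H^2_n}\otimes(-)$ gives
\[
\Pi = (I_{H^2_n}\otimes J)(I_{H^2_n}\otimes U)\Pi_c = \bigl(I_{H^2_n}\otimes (JU)\bigr)\Pi_c = (I_{H^2_n}\otimes V)\Pi_c,
\]
which is the asserted factorization.

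Since every step is essentially bookkeeping once Theorem~\ref{u-dil} is in hand, I do not expect a serious obstacle; the only points requiring a little care are the structure theorem for reducing subspaces of $M_z$ on $H^2_n(\cle)$ — namely that they are exactly the ampliations $H^2_n(\cll)$ — and the verification that compressing $\Pi$ to this reducing subspace preserves the dilation property, both of which were recorded in Section~\ref{dilation}.
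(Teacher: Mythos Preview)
Your proof is correct and follows essentially the same route as the paper: pass to the smallest reducing subspace $H^2_n(\cll)$ containing ${\rm ran}(\Pi)$, note that $\Pi$ viewed into $H^2_n(\cll)$ is a minimal dilation, apply Theorem~\ref{u-dil} to obtain a unitary $U:\cle_c\to\cll$, and then compose with the inclusion $\cll\hookrightarrow\cle$ to get the isometry $V$. Your write-up is somewhat more explicit about the inclusion map $J$ and the verification of the intertwining relations, but the argument is the same.
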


\NI\textsf{Proof.} As shown in Section \ref{dilation} there is a closed
subspace $\clf \subseteq {\cle}$ such that
\[
H^2_n(\clf) = \overline{\mbox{span}} \{z^{\bm{k}} {\Pi} \clh : \bm{k} \in
\mathbb{N}^n\}.
\]
Then by definition ${\Pi} : \clh \raro H^2_n(\clf)$ is a
minimal dilation of $T$. By Theorem \ref{u-dil}, there exists a
unitary operator $U : \cle_c \raro \clf$ such that
\[
{\Pi} = (I_{H^2_n} \otimes U) \Pi_c.
\]
Clearly, $U$ regarded as an operator with values in $\cle$, defines an isometry $V$ with
the required properties. \qed

\newsection{Joint invariant subspaces}

In this section we study the structure of joint invariant subspaces
of pure commuting row contractions.

We begin with the following characterization of invariant subspaces from \cite{JS2}
(Theorem 3.2). The proof follows as an elementary application of the above dilation results.

\begin{Theorem}\label{inv}
Let $T = (T_1, \ldots, T_n)$ be a pure commuting contractive tuple
on $\clh$ and let $\cls$ be a closed subspace of $\clh$. Then $\cls$ is
a joint $T$-invariant subspace of $\clh$ if and only if there exists
a Hilbert space $\cle$ and a partial isometry $\Pi \in \clb(H^2_n(\cle), \clh)$ with
$\Pi M_{z_i} = T_i \Pi$ for $i = 1, \ldots ,n$ and
\[
\cls = \Pi (H^2_n(\cle)).
\]
\end{Theorem}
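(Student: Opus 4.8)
The plan is to prove both implications, the easy one being that $\cls = \Pi(H^2_n(\cle))$ for a module map $\Pi$ is automatically $T$-invariant. Indeed, if $h = \Pi f$ with $f \in H^2_n(\cle)$, then $T_i h = T_i \Pi f = \Pi M_{z_i} f \in \Pi(H^2_n(\cle)) = \cls$, so $\cls$ is $T$-invariant; one should also note that $\cls$ is closed because a partial isometry has closed range. This disposes of the ``if'' direction in a couple of lines.

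For the ``only if'' direction, suppose $\cls \subseteq \clh$ is a closed $T$-invariant subspace. Since $T$ is a pure commuting contractive tuple, so is its restriction $T|_{\cls} = (T_1|_{\cls}, \ldots, T_n|_{\cls})$ on $\cls$: the inequality $P_{T|_\cls}(I_\cls) \le I_\cls$ follows from $T$-invariance (for $x \in \cls$ one has $\sum_i \|T_i^*|_\cls \, x\|$... actually more directly, $P_{T|_\cls}(I_\cls) = P_\cls P_T(I_\clh)|_\cls$ is dominated by $P_\cls I_\clh|_\cls = I_\cls$ since $T_i \cls \subseteq \cls$), and purity is inherited because $P_{T|_\cls}^m(I_\cls) \le P_\cls P_T^m(I_\clh)|_\cls \to 0$ in SOT. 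Therefore, by the construction recalled at the end of Section \ref{dilation}, $T|_\cls$ admits its canonical Arveson dilation $\Pi_c : \cls \raro H^2_n(\cle)$, an isometry with $M_{z_i}^* \Pi_c = \Pi_c (T_i|_\cls)^*$ for all $i$, where $\cle = \overline{\mathrm{ran}}(I_\cls - P_{T|_\cls}(I_\cls))$.

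The key step is then to dualize: set $\Pi = \iota\, \Pi_c^* : H^2_n(\cle) \raro \clh$, where $\iota : \cls \hookrightarrow \clh$ is the inclusion. Since $\Pi_c$ is an isometry, $\Pi_c \Pi_c^*$ is the orthogonal projection of $H^2_n(\cle)$ onto $\mathrm{ran}(\Pi_c)$, so $\Pi = \iota \Pi_c^*$ is a partial isometry with initial space $\mathrm{ran}(\Pi_c)$ and final space $\mathrm{ran}(\iota \Pi_c^*) = \cls$ (using that $\Pi_c^*$ is surjective onto $\cls$, as $\Pi_c$ is an isometry). Thus $\cls = \Pi(H^2_n(\cle))$. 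It remains to check the intertwining relation $\Pi M_{z_i} = T_i \Pi$. Taking adjoints of the dilation relation $M_{z_i}^* \Pi_c = \Pi_c (T_i|_\cls)^*$ gives $\Pi_c^* M_{z_i} = (T_i|_\cls) \Pi_c^*$ as maps $H^2_n(\cle) \raro \cls$; composing with $\iota$ and using $\iota (T_i|_\cls) = T_i \iota$ (which is just $T$-invariance of $\cls$) yields $\Pi M_{z_i} = \iota \Pi_c^* M_{z_i} = \iota (T_i|_\cls) \Pi_c^* = T_i \iota \Pi_c^* = T_i \Pi$, as required.

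I expect the only subtle point to be the bookkeeping around ranges and initial/final spaces of the partial isometry $\Pi = \iota \Pi_c^*$ — in particular making sure $\Pi(H^2_n(\cle))$ is exactly $\cls$ and not merely dense in it, which is clean here precisely because $\Pi_c$ is an isometry with $\Pi_c^*$ surjective onto $\cls$. Everything else (inheritance of purity and contractivity by the restriction, existence of the canonical dilation, the adjoint computation) is routine given the material already developed in Sections \ref{dilation} and the definition of dilation; no appeal to the uniqueness results of Section 3 is needed for this statement, though those will be used later to pin down $\Pi$ up to the natural equivalence.
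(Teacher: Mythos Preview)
Your proof is correct and follows essentially the same approach as the paper: restrict $T$ to $\cls$, observe that the restriction is again a pure commuting row contraction, take a dilation $\Pi_{\cls}:\cls\to H^2_n(\cle)$, and set $\Pi=\iota\,\Pi_{\cls}^*$. One small slip: the claimed equality $P_{T|_\cls}(I_\cls) = P_\cls P_T(I_\clh)|_\cls$ is false in general (for $x\in\cls$ the left side has quadratic form $\sum_i\|P_\cls T_i^*x\|^2$ while the right has $\sum_i\|T_i^*x\|^2$), but only the inequality $\leq$ is needed, and that you state and use correctly for the purity argument.
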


\NI\textsf{Proof.} We indicate the main ideas. Let $\cls \subset \clh$ be a closed invariant
subspace for $T$. Since
\[
\langle P^m_{(T| \cls)}(I_{\cls})x,x \rangle = \sum_{| \bm{k} | = m} \frac{|\bm{k} |!}{\bm{k} !}
\| P_{\cls} T^{\ast \bm{k}}x \|^2 \stackrel{m}{\rightarrow} 0
\]
for each $x \in \cls$, the restriction $T|_{\cls}$ is a pure commuting row contraction again.
Let $\Pi_{\cls} : \cls \raro H^2_n(\cle)$ be a dilation of $T|_{\cls}$ and let
$i_{\cls} : \cls \raro \clh$ be the inclusion map. Then
\[
\Pi = i_{\cls} \circ \Pi_{\cls}^* : H^2_n(\cle) \raro \clh
\]
is a partial isometry with the required properties. The reverse implication obviously holds. \qed

\begin{Remark}\label{canonical-sub}
Let $\cls \subset \clh$ be a closed invariant subspace of a pure commuting row contraction
$T \in \clb(\clh)^n$ and let $\Pi_{\cls} : \cls \raro H^2_n(\cle)$ be a minimal dilation of the
restriction $T|_{\cls}$. Then the map
\[
\Pi: H^2_n(\cle) \stackrel{\Pi^*_{\cls}}{\longrightarrow} \cls \hookrightarrow \clh
\]
is a partial isometry with {\rm ran}$\Pi = \cls$ such that $\Pi$ intertwines $M_z$
on $H^2_n(\cle)$ and $T$ on $\clh$ componentwise. Any map $\Pi$ arising in this way
will be called a {\rm canonical representation} of $\cls$. Note that in this situation
$\Pi^*|_{\cls} = \Pi_{\cls}$ is a minimal dilation of $T|_{\cls}$.

If $\Pi : \; H^2_n(\cle) \rightarrow \clh$ and $\tilde{\Pi} : \; H^2_n(\tilde{\cle}) \rightarrow \clh$
are two canonical representations of $\cls$, then by Theorem \ref{u-dil} there is a unitary
operator $U \in \clb(\tilde{\cle},\cle)$ such that $\tilde{\Pi} = \Pi (I_{H^2_n} \otimes U)$.
\end{Remark}

By dualizing Corollary \ref{u-dil2} one obtains the following uniqueness result.

\begin{Theorem}\label{comp-ca}
Let $T \in \clb(\clh)^n$ be a pure commuting contractive tuple and let
$\Pi : \; H^2_n(\cle) \rightarrow \clh$ be a canonical representation of a
closed $T$-invariant subspace $\cls \subset \clh$. If $\tilde{\Pi} : \; H^2_n(\tilde{\cle}) \rightarrow \clh$
is a partial isometry with $\cls = \tilde{\Pi} H^2_n(\tilde{\cle})$ and $\tilde{\Pi}
M_{z_i} = T_i \tilde{\Pi}$ for $i = 1, \ldots, n$, then there
exists an isometry $V : \cle \raro \tilde{\cle}$ such that
\[
\tilde{\Pi} = \Pi (I_{H^2_n} \otimes V^*).
\]
\end{Theorem}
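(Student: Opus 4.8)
The plan is to dualize Corollary \ref{u-dil2} in the same way that Theorem \ref{inv} was obtained by dualizing the dilation. First I would pass to adjoints. Set $\Gamma = \Pi^*|_{\cls}$ and $\tilde{\Gamma} = \tilde{\Pi}^*|_{\cls}$. The point is that a partial isometry $\tilde{\Pi} : H^2_n(\tilde{\cle}) \to \clh$ with $\cls = \tilde{\Pi} H^2_n(\tilde{\cle})$ and $\tilde{\Pi} M_{z_i} = T_i \tilde{\Pi}$ is, up to taking adjoints, the same data as a dilation of $T|_{\cls}$. Indeed, the initial space of $\tilde{\Pi}$ is $(\ker \tilde{\Pi})^\perp$, and $\tilde{\Pi}$ restricts to a unitary from this space onto $\cls$; since $\tilde{\Pi}$ intertwines $M_z$ and $T$, the range $\cls$ is $T$-invariant and $\ker\tilde{\Pi}$ is $M_z$-invariant, so $\tilde{\Pi}^*$ maps $\cls$ isometrically into $H^2_n(\tilde{\cle})$ with $M_{z_i}^* \tilde{\Pi}^*|_{\cls} = \tilde{\Pi}^*|_{\cls} (T_i|_{\cls})^*$. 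Thus $\tilde{\Gamma} := \tilde{\Pi}^*|_{\cls} : \cls \to H^2_n(\tilde{\cle})$ is a dilation of $T|_{\cls}$ in the sense of Section \ref{dilation}. Likewise, by Remark \ref{canonical-sub}, $\Gamma := \Pi^*|_{\cls} = \Pi_{\cls} : \cls \to H^2_n(\cle)$ is a \emph{minimal} dilation of $T|_{\cls}$.

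Next I would invoke Corollary \ref{u-dil2} applied to the pure commuting contractive tuple $T|_{\cls}$ on $\cls$: since $\Gamma$ is a minimal dilation (hence by Remark \ref{canonical} unitarily equivalent to the canonical dilation) and $\tilde{\Gamma}$ is an arbitrary dilation of $T|_{\cls}$, there is an isometry $V : \cle \to \tilde{\cle}$ with
\[
\tilde{\Gamma} = (I_{H^2_n} \otimes V)\, \Gamma.
\]
Strictly, Corollary \ref{u-dil2} is stated with the canonical dilation $\Pi_c$ in the second slot; one combines it with the unitary from Remark \ref{canonical} identifying $\Gamma$ with $\Pi_c$ to get the displayed factorization through the given minimal dilation $\Gamma$. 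Taking adjoints of this identity on $\cls$ gives $\tilde{\Gamma}^* = \Gamma^* (I_{H^2_n} \otimes V^*)$ as maps $H^2_n(\tilde{\cle}) \to \cls$, i.e. $\tilde{\Pi}^*|_{\cls}{}^* = (\Pi^*|_{\cls})^* (I_{H^2_n} \otimes V^*)$.

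Finally I would translate this back to $\Pi$ and $\tilde{\Pi}$ themselves. Here one uses that $\Pi$ and $\tilde{\Pi}$ are partial isometries whose final space is $\cls$ and whose initial spaces are $(\ker\Pi)^\perp$, $(\ker\tilde{\Pi})^\perp$: concretely $\Pi = i_{\cls} \circ \Gamma^*$ and $\tilde{\Pi} = i_{\cls} \circ \tilde{\Gamma}^*$ where $i_{\cls}:\cls\hookrightarrow\clh$, because $\Pi$ kills $\ker\Pi$ and on $(\ker\Pi)^\perp$ equals the inverse of the unitary $\Pi^*|_{\cls}$, which is exactly $\Gamma^*$ followed by inclusion. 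Substituting the adjoint identity yields
\[
\tilde{\Pi} = i_{\cls} \circ \tilde{\Gamma}^* = i_{\cls} \circ \Gamma^* (I_{H^2_n}\otimes V^*) = \Pi\, (I_{H^2_n} \otimes V^*),
\]
which is the claim. The only genuinely delicate point is the bookkeeping in the first and last steps — verifying cleanly that a partial isometry intertwining $M_z$ with $T$ and having range $\cls$ is interchangeable with a dilation of $T|_{\cls}$ via $\Pi \leftrightarrow \Pi^*|_{\cls}$, in particular that $\ker\Pi = H^2_n(\cle)\ominus \Gamma\cls$ is $M_z$-reducing when $\Pi$ is of canonical type and $M_z$-invariant in general; once that dictionary is set up, the theorem is a formal consequence of Corollary \ref{u-dil2} and Remark \ref{canonical}. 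I expect no real obstacle beyond this routine adjoint-translation.
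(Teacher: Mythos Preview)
Your proposal is correct and follows essentially the same route as the paper: pass to $\tilde{\Pi}^*|_{\cls}$ and $\Pi^*|_{\cls}$, recognize these as a dilation and a minimal dilation of $T|_{\cls}$ respectively, apply Corollary~\ref{u-dil2} (together with Remark~\ref{canonical}) to obtain $\tilde{\Pi}^*|_{\cls} = (I_{H^2_n}\otimes V)\Pi^*|_{\cls}$, and then take adjoints using that both partial isometries have range $\cls$. The only difference is cosmetic: the paper dispatches the final adjoint step in one line, whereas you spell out the $i_{\cls}\circ\Gamma^*$ bookkeeping explicitly.
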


\NI \textsf{Proof.} Since $\tilde{\Pi}^*$ is a partial isometry with
$\ker\tilde{\Pi}^* = ({\rm ran} \tilde{\Pi})^{\bot} = \cls^{\bot}$,
the map $\tilde{\Pi}^*: \cls \raro H^2_n(\tilde{\cle})$ is an isometry. The
adjoint of this isometry intertwines the tuples $M_z \in \clb(H^2_n(\tilde{\cle}))^n$
and $T|_{\cls}$. Hence $\tilde{\Pi}^*: \cls \raro H^2_n(\tilde{\cle})$ is a
dilation of $T|_{\cls}$. Since $\Pi^*: \cls \raro H^2_n(\cle)$ is a minimal
dilation of $T|_{\cls}$, Corollary \ref{u-dil2} implies that there is an isometry
$V: \cle \raro \tilde{\cle}$ such that
\[
\tilde{\Pi}^*|_{\cls} = (I_{H^2_n} \otimes V) \Pi^*|_{\cls}.
\]
By taking adjoints and using the fact that {\rm ran} $\Pi =$ {\rm ran} $\tilde{\Pi} = \cls$,
we obtain that
\[
\tilde{\Pi} = \Pi (I_{H^2_n} \otimes V^*).
\]
Thus the proof is complete.
\qed

\begin{Corollary}\label{comp-aa}
Let $T \in \clb(\clh)^n$ be a pure commuting contractive tuple and let $\cls \subset \clh$
be a closed $T$-invariant subspace. Suppose that
\[
\Pi_j : H^2_n(\cle_j) \raro \clh \quad (j = 1,2)
\]
are partial isometries with range $\cls$ such that $\Pi_j$ intertwines $M_z$ on
$H^2_n(\cle_j)$ and $T$ on $\clh$ for $j = 1,2$. Then there exists a partial isometry $V
: \cle_1 \raro \cle_2$ such that \[\Pi_1 = \Pi_2 (I_{H^2_n} \otimes
V).\]
\end{Corollary}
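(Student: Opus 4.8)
The plan is to reduce the statement to the uniqueness result Theorem~\ref{comp-ca} by inserting a single canonical representation of $\cls$ as an intermediary. First I would invoke Remark~\ref{canonical-sub}: since $\cls$ is $T$-invariant and $T|_{\cls}$ is again a pure commuting row contraction (by the computation in the proof of Theorem~\ref{inv}), it admits a minimal dilation $\Pi_{\cls} : \cls \raro H^2_n(\cle)$, and the associated partial isometry
\[
\Pi: H^2_n(\cle) \stackrel{\Pi^*_{\cls}}{\longrightarrow} \cls \hookrightarrow \clh
\]
is a canonical representation of $\cls$; in particular ${\rm ran}\,\Pi = \cls$ and $\Pi$ intertwines $M_z$ on $H^2_n(\cle)$ with $T$ on $\clh$ componentwise.

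Next, for each $j \in \{1,2\}$ the given map $\Pi_j : H^2_n(\cle_j) \raro \clh$ is a partial isometry with range $\cls$ satisfying $\Pi_j M_{z_i} = T_i \Pi_j$, so Theorem~\ref{comp-ca} applies with $\tilde{\Pi} = \Pi_j$ and yields an isometry $V_j : \cle \raro \cle_j$ with
\[
\Pi_j = \Pi (I_{H^2_n} \otimes V_j^*) \qquad (j = 1,2).
\]

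Finally I would put $V = V_2 V_1^* : \cle_1 \raro \cle_2$. Since $V_1^*$ is a coisometry of $\cle_1$ onto $\cle$ with initial space ${\rm ran}\,V_1$ and $V_2$ is an isometry, the composition $V$ is a partial isometry (isometric on ${\rm ran}\,V_1$ and zero on $({\rm ran}\,V_1)^{\perp} = \ker V_1^*$). Using $V_2^* V_2 = I_{\cle}$ one then computes
\[
\Pi_2 (I_{H^2_n} \otimes V) = \Pi (I_{H^2_n} \otimes V_2^*)(I_{H^2_n} \otimes V_2 V_1^*) = \Pi (I_{H^2_n} \otimes V_1^*) = \Pi_1,
\]
which is the required identity.

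There is no serious obstacle here: the result is a bookkeeping consequence of Theorem~\ref{comp-ca}, and the only point needing a moment's care is checking that $V = V_2 V_1^*$ is genuinely a partial isometry rather than merely a contraction, which is immediate from the description of its action on ${\rm ran}\,V_1$ and its orthogonal complement. Equivalently, one can bypass the explicit $\Pi$ and argue by dualization directly: both $\Pi_j^*|_{\cls} : \cls \raro H^2_n(\cle_j)$ are dilations of $T|_{\cls}$, so choosing any minimal dilation of $T|_{\cls}$ and applying Corollary~\ref{u-dil2} to each of them produces the isometries $V_1, V_2$ above, after which the same computation finishes the proof.
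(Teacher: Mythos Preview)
Your proof is correct and follows essentially the same route as the paper: fix a canonical representation $\Pi$ of $\cls$, apply Theorem~\ref{comp-ca} to each $\Pi_j$ to obtain isometries $V_j:\cle\to\cle_j$ with $\Pi_j=\Pi(I_{H^2_n}\otimes V_j^*)$, and set $V=V_2V_1^*$. Your verification is slightly more explicit (you spell out why $V$ is a partial isometry and use $V_2^*V_2=I_\cle$ directly, whereas the paper writes $\Pi=\Pi_2(I_{H^2_n}\otimes V_2)$ first), but the argument is the same.
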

\NI\textsf{Proof.} Let $\cls = \Pi H^2_n(\cle)$ be a canonical
representation of $\cls$. Theorem \ref{comp-ca} implies that
\[
\Pi_j = \Pi (I_{H^2_n} \otimes V_j^*)
\]
for some isometry $V_j : \cle
\raro \cle_j$, $j = 1,2$. Therefore,
\[
\begin{split}\Pi_1 & = \Pi
(I_{H^2_n} \otimes V_1^*) \\ & = (\Pi_2 (I_{H^2_n} \otimes V_2))
(I_{H^2_n} \otimes V_1^*) \\ & = \Pi_2 (I_{H^2_n} \otimes V_2 V_1^*)
\\ & = \Pi_2 (I_{H^2_n} \otimes V),
\end{split}
\]
where $V = V_2 V_1^*: \cle_1 \raro \cle_2$ is a partial isometry. This completes the proof.
\qed
\vspace{.6cm}

\section{Parameterizations of Wandering Subspaces}

In this section we consider parameterizations of wandering subspaces for pure
commuting row contractions $T \in \clb(\clh)^n$ which in the  case of the
one-variable shift $T = M_z \in  \clb(H^2_{\cle}(\mathbb D))$ on the Hardy space
of the unit disc reduce to the representation (\ref{paramet}).
As suggested by Theorem \ref{inv}
the isometric intertwiner $V: H^2_{\clw}(\mathbb D) \rightarrow H^2_{\cle}(\mathbb D)$ from
the introduction is replaced by a partial isometry $\Pi: H^2_n(\cle) \rightarrow H$
intertwining $M_z \in \clb(H^2_n(\cle))^n$ and $T \in \clb(\clh)^n$.

We begin with an elementary but crucial observation concerning the uniqueness of
wandering subspaces for commuting tuples of operators.

Let $\clw$ be a wandering subspace for a commuting tuple $T \in \clb(\clh)^n$. Set
\[
\clg_{T, \clw} = \bigvee_{\bm{k} \in \mathbb N^n} T^{\bm{k}} \clw .
\]
An elementary argument shows that
\[
\clg_{T, \clw} \ominus \sum_{i=1}^n T_i \clg_{T, \clw} =
\bigvee_{\bm{k} \in \mathbb{N}^n} T^{\bm{k}} \clw \ominus \bigvee_{\bm{k} \in
\mathbb{N}^n \setminus \{\bm{0}\}} T^{\bm{k}} \clw = \clw.
\]
It follows that
\[
\clw = \mathop{\cap}_{i=1}^n (\clg_{T, \clw} \ominus T_i \clg_{T, \clw}).
\]
Consequently, we have the following result.

\begin{Proposition}\label{u-wandering}
Let $T \in \clb(\clh)^n$ be a commuting tuple of bounded operators on a Hilbert space $\clh$ and let $\clw$ be a
wandering subspace for $T$. Then
\[\clw = {\cap}_{i=1}^n (\clg_{T, \clw} \ominus T_i \clg_{T, \clw})
= \clg_{T, \clw} \ominus \sum_{i=1}^n T_i \clg_{T, \clw}.\] In
particular, if $\clw$ is a generating wandering subspace for $T$,
then \[\clw = \cap_{i=1}^n (\clh \ominus T_i \clh) = \clh \ominus
\sum_{i=1}^n T_i \clh.\]
\end{Proposition}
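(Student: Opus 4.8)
The plan is to establish the two chains of equalities in Proposition \ref{u-wandering} by first proving the identity $\clg_{T,\clw} \ominus \sum_{i=1}^n T_i \clg_{T,\clw} = \clw$ for an arbitrary wandering subspace $\clw$, and then deducing the ``intersection form'' from it. The specialization to generating wandering subspaces is then immediate by taking $\clg_{T,\clw} = \clh$. The only genuine content is the first identity, which in the excerpt is asserted via the computation
\[
\bigvee_{\bm{k} \in \mathbb{N}^n} T^{\bm{k}} \clw \ominus \bigvee_{\bm{k} \in \mathbb{N}^n \setminus \{\bm{0}\}} T^{\bm{k}} \clw = \clw,
\]
so the task is to justify each step of this carefully.

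First I would observe that $\sum_{i=1}^n T_i \clg_{T,\clw} = \overline{\sum_{i=1}^n T_i \clg_{T,\clw}} = \bigvee_{\bm{k} \in \mathbb{N}^n \setminus \{\bm{0}\}} T^{\bm{k}} \clw$; the inclusion $\supseteq$ is clear since any $T^{\bm{k}}\clw$ with $\bm{k} \neq \bm{0}$ is of the form $T_i(T^{\bm{k} - e_i}\clw) \subseteq T_i \clg_{T,\clw}$, and $\subseteq$ holds because $T_i \clg_{T,\clw} = \overline{T_i \operatorname{span}\{T^{\bm{k}}\clw\}} \subseteq \bigvee_{\bm{k} \neq \bm{0}} T^{\bm{k}}\clw$ (using continuity of $T_i$ on the dense span). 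Since by definition $\clg_{T,\clw} = \bigvee_{\bm{k} \in \mathbb{N}^n} T^{\bm{k}}\clw$ contains $\bigvee_{\bm{k} \neq \bm{0}} T^{\bm{k}}\clw$, the orthogonal complement $\clg_{T,\clw} \ominus \sum_i T_i\clg_{T,\clw}$ equals $\clg_{T,\clw} \cap \big(\bigvee_{\bm{k}\neq\bm{0}} T^{\bm{k}}\clw\big)^{\perp}$. The wandering condition $\clw \perp T^{\bm{k}}\clw$ for all $\bm{k} \neq \bm{0}$ gives exactly $\clw \subseteq \big(\bigvee_{\bm{k}\neq\bm{0}} T^{\bm{k}}\clw\big)^{\perp}$, and $\clw \subseteq \clg_{T,\clw}$ trivially, so $\clw \subseteq \clg_{T,\clw} \ominus \sum_i T_i\clg_{T,\clw}$. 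For the reverse inclusion, take $x$ in the complement; since $\clg_{T,\clw}$ is spanned by $\clw$ together with $\bigvee_{\bm{k}\neq\bm{0}} T^{\bm{k}}\clw$, and $x$ is orthogonal to the latter, $x$ must lie in the closure of the $\clw$-part, i.e.\ $x \in \overline{\clw} = \clw$; making this precise just amounts to writing $\clg_{T,\clw} = \clw \vee \big(\bigvee_{\bm{k}\neq\bm{0}} T^{\bm{k}}\clw\big)$ and using that the orthogonal complement within a sum $\clm = \cln \vee \clp$ of the summand $\clp$, intersected with $\clm$, is $\clm \ominus \clp$, which here is contained in $\overline{\cln} = \clw$ because every element of $\clm$ orthogonal to $\clp$ is a limit of combinations whose components along $\clp$ vanish. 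This yields $\clw = \clg_{T,\clw} \ominus \sum_{i=1}^n T_i\clg_{T,\clw}$.

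Next, for the intersection form, I would note that $\sum_{i=1}^n T_i \clg_{T,\clw} \supseteq T_i \clg_{T,\clw}$ for each fixed $i$, hence $\clg_{T,\clw} \ominus \sum_{i} T_i\clg_{T,\clw} \subseteq \clg_{T,\clw} \ominus T_i\clg_{T,\clw}$ for each $i$, giving $\clw \subseteq \cap_{i=1}^n(\clg_{T,\clw} \ominus T_i\clg_{T,\clw})$. Conversely, if $x \in \clg_{T,\clw}$ is orthogonal to $T_i\clg_{T,\clw}$ for every $i = 1,\ldots,n$, then $x \perp T^{\bm{k}}\clw$ for every $\bm{k} \neq \bm{0}$ (since such $T^{\bm{k}}\clw \subseteq T_i\clg_{T,\clw}$ for any $i$ with $k_i \geq 1$), so $x \in \clg_{T,\clw} \ominus \bigvee_{\bm{k}\neq\bm{0}} T^{\bm{k}}\clw = \clg_{T,\clw}\ominus \sum_i T_i\clg_{T,\clw} = \clw$. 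Thus all three expressions coincide. Finally, when $\clw$ is generating, $\clg_{T,\clw} = \clh$ by definition, and substituting gives $\clw = \cap_{i=1}^n(\clh \ominus T_i\clh) = \clh \ominus \sum_{i=1}^n T_i\clh$.

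The only point requiring care — and the step I expect to be the mild obstacle — is the closure bookkeeping in the reverse inclusion: passing from ``$x \in \clg_{T,\clw}$ and $x \perp \bigvee_{\bm{k}\neq\bm{0}} T^{\bm{k}}\clw$'' to ``$x \in \clw$''. This is not completely formal because $\clg_{T,\clw}$ is a closed span, not an algebraic direct sum, so one should argue via the orthogonal decomposition $\clg_{T,\clw} = \clw \oplus \big(\bigvee_{\bm{k}\neq\bm{0}} T^{\bm{k}}\clw\big)$, which is itself valid precisely because of the wandering hypothesis $\clw \perp T^{\bm{k}}\clw$ ($\bm{k}\neq\bm{0}$): the two summands are orthogonal and their join is all of $\clg_{T,\clw}$, so the projection of any $x \in \clg_{T,\clw}$ onto the second summand is zero iff $x \in \clw$. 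Everything else is routine Hilbert space manipulation together with the continuity of the operators $T_i$.
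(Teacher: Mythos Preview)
Your argument is correct and follows exactly the paper's approach: the paper simply asserts the identity $\clg_{T,\clw}\ominus\sum_i T_i\clg_{T,\clw}=\bigvee_{\bm k}T^{\bm k}\clw\ominus\bigvee_{\bm k\neq\bm 0}T^{\bm k}\clw=\clw$ as an ``elementary argument'' and deduces the intersection form from it, which is precisely what you have unpacked. One minor slip: the equality $\sum_{i=1}^n T_i\clg_{T,\clw}=\overline{\sum_{i=1}^n T_i\clg_{T,\clw}}$ need not hold in general, but this is harmless since $\clg\ominus M=\clg\ominus\overline{M}$ and the rest of your argument only uses the closed span $\bigvee_{\bm k\neq\bm 0}T^{\bm k}\clw$.
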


Starting point for our description of wandering subspaces is the following
general observation.

\begin{Theorem}\label{inv-tuple}
Let $T \in \clb(\clh)^n$ be a commuting tuple of bounded operators on a
Hilbert space $\clh$ and let $\Pi: H^2_n(\cle) \raro \clh$ be a partial isometry
with $\Pi M_{z_i} = T_i \Pi$ for $i = 1, \ldots, n$. Then $\mathcal{S} = \Pi (H^2_n(\cle))$
is a closed $T$-invariant subspace, $\clw = \cls \ominus \sum_{i=1}^n T_i \cls$ is a wandering
subspace for $T|_{\cls}$ and
\[
\clw = \Pi ((\ker \Pi)^{\bot} \cap \cle).
\]
\end{Theorem}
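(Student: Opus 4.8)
The plan is to exploit the intertwining relation $\Pi M_{z_i} = T_i \Pi$ together with the structure of the Drury--Arveson space. First I would record the trivial facts: since $\Pi$ intertwines each $M_{z_i}$ with $T_i$, the range $\cls = \Pi(H^2_n(\cle))$ is $T$-invariant, and $T^{\bm k}\Pi = \Pi M_z^{\bm k}$ for all $\bm k \in \mathbb N^n$. Because $H^2_n(\cle) = \bigvee_{\bm k} z^{\bm k}\cle$ (identifying $\cle$ with the constant functions), applying $\Pi$ gives $\cls = \bigvee_{\bm k \in \mathbb N^n} T^{\bm k}\,\Pi(\cle)$. Next I would show $\sum_{i=1}^n T_i\cls = \Pi\big(\sum_{i=1}^n M_{z_i} H^2_n(\cle)\big) = \Pi\big(H^2_n(\cle) \ominus \cle\big)$, using that $H^2_n(\cle) \ominus \cle = \sum_i z_i H^2_n(\cle)$ is the closure of the span of monomials of positive degree; the only subtlety is that $\Pi$ need not have closed-ish behaviour on sums, but since $\Pi$ is bounded and $\sum_i z_i H^2_n(\cle)$ is already closed, $\overline{\sum_i T_i\cls} = \overline{\Pi(\sum_i z_i H^2_n(\cle))}$, and one checks this closure equals $\Pi(\sum_i z_i H^2_n(\cle))$ modulo $\ker\Pi$.

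The heart of the argument is the computation of $\clw = \cls \ominus \sum_i T_i\cls$. Write $\cln = \ker\Pi$ and $\clm = \cln^{\bot}$, so $\Pi|_{\clm}$ is an isometry onto $\cls$. Under this isometry, orthogonal complements inside $\cls$ correspond to orthogonal complements inside $\clm$. Thus $\clw = \Pi\big(\clm \ominus P_{\clm}(\clm \cap (\sum_i z_i H^2_n(\cle)))\big)$ — more precisely, $\clw$ is the image under $\Pi$ of the orthogonal complement in $\clm$ of $P_{\clm}\big(\sum_i z_i H^2_n(\cle)\big)$. I would then argue that a vector $g \in \clm$ is orthogonal to $P_{\clm}(\sum_i z_i H^2_n(\cle))$ iff $g \perp \sum_i z_i H^2_n(\cle)$ in $H^2_n(\cle)$ (since $g \in \clm$ already), i.e. iff $g \in \cle$ (the constants). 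Hence the relevant subspace of $\clm$ is exactly $\clm \cap \cle = (\ker\Pi)^{\bot}\cap\cle$, giving $\clw = \Pi\big((\ker\Pi)^{\bot}\cap\cle\big)$.

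The one place requiring care — and what I expect to be the main obstacle — is the interchange of the projection $P_{\clm}$ onto $(\ker\Pi)^{\bot}$ with the subspace $\sum_i z_i H^2_n(\cle)$: one must verify that $\clm \ominus \big(\clm \cap (\sum_i z_i H^2_n(\cle)) \text{ plus the compression correction}\big)$ collapses cleanly to $\clm\cap\cle$. The clean way to handle this is to use that $P_{\cle} = I - \sum_i M_{z_i}M_{z_i}^*$ is the orthogonal projection of $H^2_n(\cle)$ onto the constants, so that $g \perp \sum_i z_i H^2_n(\cle) \iff P_{\cle}g = g \iff g\in\cle$; combined with $g\in\clm$ this isolates $\clm\cap\cle$ without any compression issues. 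Finally I would invoke Proposition~\ref{u-wandering} to confirm that the subspace so obtained is indeed the wandering subspace $\cls\ominus\sum_i T_i\cls$ and that it is wandering for $T|_{\cls}$, completing the proof. Note that the displayed parametrization specializes, when $\Pi = M_{\Theta}$ is isometric (so $\ker\Pi = 0$), to $\clw = \Theta\cle$, recovering~(\ref{paramet}).
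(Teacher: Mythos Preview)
Your proposal is correct and follows essentially the same route as the paper: both arguments pull the problem back through the isometry $\Pi|_{(\ker\Pi)^{\bot}}$ and use that a vector in $H^2_n(\cle)$ is orthogonal to $\sum_i z_i H^2_n(\cle)$ precisely when it lies in $\cle$. The paper sidesteps all of your closure and ``compression correction'' worries by computing inner products directly via the identity $\Pi^*\Pi = P_{(\ker\Pi)^{\bot}}$, writing $\langle \Pi f, \Pi z_i h\rangle = \langle \Pi^*\Pi f, z_i h\rangle = \langle f, z_i h\rangle$ for $f \in (\ker\Pi)^{\bot}$; this two-line computation gives both inclusions at once and makes the projection bookkeeping you anticipated unnecessary.
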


\begin{proof}
Define $\clf = (\ker \Pi)^{\bot} \cap \cle .$
Obviously the range $\cls$ of $\Pi$ is a closed $T$-invariant subspace and
$\clw = \cls \ominus \sum_{i=1}^n T_i \cls$ is a wandering subspace for $T|_{\cls}$.
To prove the claimed representation of $\clw$ note first that
\[
\clw = \Pi (H^2_n(\cle)) \ominus \sum_{i=1}^n T_i \Pi (H^2_n(\cle))
= \Pi (H^2_n(\cle)) \ominus \sum_{i=1}^n \Pi M_{z_i} (H^2_n(\cle)).
\]
For $f \in \clf, h \in H^2_n(\cle)$ and $i = 1, \ldots ,n$, we have
\[
\langle \Pi f, \Pi z_i h \rangle = \langle \Pi^* \Pi f, z_i h \rangle = \langle f, z_i h \rangle = 0.
\]
Conversely, each element in $\clw$ can be written as $\Pi f$ with $f \in (\ker \Pi)^{\bot}.$
But then, for $h \in H^2_n(\cle)$ and $i = 1, \ldots ,n$, we obtain
\[
\langle f, z_i h \rangle = \langle \Pi f, \Pi z_i h \rangle = 0.
\]
To conclude the proof it suffices to recall that
$H^2_n(\cle) \ominus \sum_{i=1}^n M_{z_i} (H^2_n(\cle)) = \cle$.
This identity is well known, but also follows directly from Proposition \ref{u-wandering},
since $\cle$ is a generating wandering subspace for $M_z \in \clb(H^2_n(\cle))^n.$
\end{proof}

If $T \in \clb(\clh)^n$ is a pure commuting row contraction, then by Theorem \ref{inv} each
closed $T$-invariant subspace $\cls \subset \clh$ admits a representation $\mathcal{S} = \Pi (H^2_n(\cle))$
as in the hypothesis of the preceding theorem.

\begin{Corollary}\label{generated_space}
In the setting of Theorem \ref{inv-tuple} the identity
\[
\bigvee_{\bm{k} \in \mathbb N^n} T^{\bm{k}} \clw = \overline{\Pi H^2_n(\clf)}
\]
holds with $\clf = (\ker \Pi)^{\bot} \cap \cle$.
\end{Corollary}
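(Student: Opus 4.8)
The plan is to compute the closed linear span $\bigvee_{\bm k \in \mathbb N^n} T^{\bm k}\clw$ directly from the representation $\clw = \Pi((\ker\Pi)^\bot \cap \cle) = \Pi(\clf)$ established in Theorem \ref{inv-tuple}, using the intertwining relation $\Pi M_{z_i} = T_i \Pi$ to push the monomials $T^{\bm k}$ through $\Pi$. First I would observe that for each $\bm k \in \mathbb N^n$ one has $T^{\bm k}\clw = T^{\bm k}\Pi(\clf) = \Pi M_z^{\bm k}(\clf) = \Pi(z^{\bm k}\clf)$. Taking the closed span over all $\bm k$ and using continuity of $\Pi$, this gives
\[
\bigvee_{\bm k \in \mathbb N^n} T^{\bm k}\clw = \overline{\Pi\Big(\overline{\mathrm{span}}\{z^{\bm k}f : \bm k \in \mathbb N^n,\ f \in \clf\}\Big)} = \overline{\Pi\big(H^2_n(\clf)\big)},
\]
where the last equality is just the fact that $\{z^{\bm k}f : \bm k\in\mathbb N^n, f\in\clf\}$ has closed span equal to $H^2_n(\clf)$ inside $H^2_n(\cle)$ — the same elementary identity used at the end of the proof of Theorem \ref{inv-tuple}, applied with $\clf$ in place of $\cle$.

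The one point requiring a small argument is the passage from $\overline{\mathrm{span}}\{\Pi(z^{\bm k}f)\}$ to $\overline{\Pi(\overline{\mathrm{span}}\{z^{\bm k}f\})}$; this is the standard fact that for a bounded operator $\Pi$ and a set $M$, $\overline{\mathrm{span}}\,\Pi(M) = \overline{\Pi(\overline{\mathrm{span}}\,M)}$, which follows from continuity of $\Pi$ together with the inclusions $\Pi(\mathrm{span}\,M) \subseteq \overline{\mathrm{span}}\,\Pi(M)$ and $\Pi(\overline{\mathrm{span}}\,M) \subseteq \overline{\Pi(\mathrm{span}\,M)}$. I do not anticipate any genuine obstacle here; the proof is essentially bookkeeping with the intertwining relation, and the only thing to be careful about is that one uses the already-proved formula $\clw = \Pi(\clf)$ rather than re-deriving it. Since $\clw$ is a wandering subspace for $T|_{\cls}$ by Theorem \ref{inv-tuple}, the left-hand side is exactly $\clg_{T|_{\cls},\,\clw}$ in the notation of Proposition \ref{u-wandering}, so the corollary simultaneously identifies that generated subspace as $\overline{\Pi H^2_n(\clf)}$.

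One alternative, if a cleaner write-up is desired, is to argue by double inclusion: $\Pi(z^{\bm k}f) = T^{\bm k}\Pi(f) \in \bigvee_{\bm j} T^{\bm j}\clw$ gives $\overline{\Pi H^2_n(\clf)} \subseteq \bigvee_{\bm k} T^{\bm k}\clw$ after taking closures, and conversely each $T^{\bm k}\clw = \Pi(z^{\bm k}\clf) \subseteq \Pi H^2_n(\clf)$ gives the reverse inclusion. Either route is short. I expect the ``main obstacle'' to be purely expository — deciding how much of the routine span/closure manipulation to spell out — rather than mathematical.
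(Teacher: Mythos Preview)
Your proposal is correct and follows essentially the same route as the paper's proof: starting from $\clw = \Pi\clf$, pushing $T^{\bm k}$ through $\Pi$ via the intertwining relation, and then using continuity of $\Pi$ together with $\bigvee_{\bm k} M_z^{\bm k}\clf = H^2_n(\clf)$. The paper compresses all of this into a single chain of equalities, without spelling out the span/closure manipulation you discuss.
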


\textsf{Proof.} \NI  Since $\clw = \Pi \clf$, we have
\[
\bigvee_{\bm{k} \in \mathbb{N}^n} T^{\bm{k}} \clw = \bigvee_{\bm{k} \in \mathbb{N}^n} T^{\bm{k}} \Pi \clf  =
\bigvee_{\bm{k} \in \mathbb{N}^n} \Pi M_z^{\bm{k}} \clf = \overline{ \Pi (\mathop{\vee}_{\bm{k} \in \mathbb{N}^n}
M_z^{\bm{k}} \clf)} = \overline{\Pi H^2_n(\clf)}.
\]
This completes the proof. \qed

\section{Contractive analytic Hilbert spaces and inner functions}

Let $K : \mathbb{B}^n \times \mathbb{B}^n \raro \mathbb{C}$ be a
positive definite function and let $\clh_K$ be the functional Hilbert
space with reproducing kernel $K$. We say that $\clh_K$
is a \textit{contractive analytic Hilbert space} (cf. \cite{JS1}, \cite{JS2})
over $\mathbb{B}^n$ if $\clh_K$ consists of analytic functions on $\mathbb B^n$
and if the multiplication tuple $ M_z = (M_{z_1}, \ldots, M_{z_n})$ is a pure row
contraction on $\clh_K$.

\NI Typical and important examples of contractive analytic Hilbert spaces
include the Drury-Arveson space, the Hardy space and the weighted
Bergman spaces over $\mathbb{B}^n$ (cf. Proposition 4.1 in
\cite{JS2}).

Let $\clh_{K}$ be a contractive analytic Hilbert space and let $\cle$ and
$\cle_*$ be arbitrary Hilbert spaces. An operator-valued map
$\Theta : \mathbb{B}^n \raro \clb(\cle, \cle_*)$ is said to be a
\textit{$K$-multiplier} if
\[
\Theta f \in \clh_{K} \otimes \cle_*
\quad {\rm for \; every} \; f \in H^2_n \otimes \cle.
\]
The set of all
$K$-multipliers is denoted by $\clm(H^2_n \otimes \cle, \clh_K
\otimes \cle_*)$. If $\Theta \in \clm(H^2_n \otimes \cle,
\clh_K \otimes \cle_*)$, then the multiplication operator
$M_{\Theta} : H^2_n \otimes \cle \raro \clh_K \otimes
\cle_*$ defined by
\[
(M_{\Theta} f)(\w) = (\Theta f)(\w) = \Theta(\w) f(\w)
\]
is bounded by the closed graph theorem. We shall call a multiplier
$\Theta$ partially isometric or isometric if the induced multiplication operator
$M_{\Theta}$ has the corresponding property.

The space of $K$-multipliers can be described in the following way
(cf. Corollary 4.3 in \cite{JS2}). Let $X$ be in $\clb(H^2_n \otimes
\cle, \clh_{K} \otimes \cle_*)$. Then $X = M_{\Theta}$ for some $\Theta \in \clm(H^2_n \otimes
\cle, \clh_{K} \otimes \cle_*)$ if and only if
\[
X(M_{z_i} \otimes I_{\cle}) = (M_{z_i} \otimes I_{\cle_*}) X\quad {\rm for} \; i = 1, \ldots, n.
\]

\begin{Definition}\label{K-inner}
{\rm Let $\Theta : \mathbb{B}^n \raro \clb(\cle, \cle_*)$ be an operator-valued  function and
let $\clh_K$ be a contractive analytic Hilbert space as above. Then $\Theta$ is said to be a}
\it{$K$-inner function} {\rm if $\Theta x \in  \clh_K
\otimes \cle_*$ with
$\|\Theta x \|_{\clh_K \otimes \cle_*} = \| x \|_{\cle}$ for all $ x \in \cle$ and if}
\[
\Theta \cle \perp M_z^{\bm{k}} (\Theta \cle) \quad {\rm for \; all} \; \bm{k} \in \mathbb{N}^n
\setminus \{0\}.
\]
\end{Definition}

The notion of $K$-inner functions for the particular case of weighted Bergman spaces
on $\mathbb{D}$ is due to A. Olofsson
\cite{O2}. His definition of Bergman inner functions was
motivated by earlier observations \cite{HH} of H. Hedenmalm concerning invariant subspaces and
wandering subspaces of the Bergman space on $\mathbb{D}$.

The following result should be compared with Theorem 6.1 in \cite{O2} or Theorem 3.3 in
\cite{HKZ}.

\begin{Theorem}\label{inner-multiplier}
Each $K$-inner function $\Theta : \mathbb{B}^n \raro \clb(\cle, \cle_*)$ is a
contractive $K$-multiplier.
\end{Theorem}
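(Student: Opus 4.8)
The plan is to show that the multiplication operator $M_\Theta$ associated with a $K$-inner function $\Theta$ is well-defined and contractive by realizing it as a compression/factorization through the canonical dilation machinery of Sections 2--4. First I would observe that, since $\clh_K$ is a contractive analytic Hilbert space, $M_z \in \clb(\clh_K)^n$ is a pure commuting row contraction, and so is the restriction $M_z|_\cls$ to the invariant subspace $\cls = \bigvee_{\bm k \in \mathbb N^n} M_z^{\bm k}(\Theta \cle) = \clg_{M_z, \Theta\cle}$ generated by $\Theta \cle$. Because $\Theta$ is $K$-inner, $\clw := \Theta \cle$ is genuinely a wandering subspace for $M_z|_\cls$, and by construction it is a generating wandering subspace; moreover the map $\cle \to \clw$, $x \mapsto \Theta x$, is a unitary (it is isometric by the $K$-inner hypothesis and onto by definition of $\clw$).

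Next I would invoke Theorem \ref{inv} (or Remark \ref{canonical-sub}) to produce a canonical representation $\Pi : H^2_n(\cle') \to \clh_K$ of the invariant subspace $\cls$, so that $\Pi$ is a partial isometry with $\operatorname{ran}\Pi = \cls$, $\Pi M_{z_i} = M_{z_i}\Pi$, and $\Pi^*|_\cls$ is the minimal (canonical) dilation of $M_z|_\cls$. By Theorem \ref{inv-tuple}, the wandering subspace is recovered as $\clw = \Pi(\clf)$ with $\clf = (\ker\Pi)^\perp \cap \cle'$, and $\Pi|_{\clf} : \clf \to \clw$ is unitary (it is isometric because $\clf \subset (\ker\Pi)^\perp$ and $\Pi$ is a partial isometry, and surjective by Theorem \ref{inv-tuple}). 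Composing, I get a unitary $u := (\Pi|_\clf)^{-1} \circ \Theta : \cle \to \clf \hookrightarrow \cle'$. Corollary \ref{generated_space} then gives $\cls = \bigvee_{\bm k} M_z^{\bm k}\clw = \overline{\Pi H^2_n(\clf)}$, so $\Pi$ restricted to $H^2_n(\clf)$ still has range $\cls$ and is a partial isometry intertwining the multiplication tuples; identifying $H^2_n(\clf)$ with $H^2_n(\cle)$ via $I_{H^2_n}\otimes u$, I obtain a partial isometry $\widetilde\Pi := \Pi \circ (I_{H^2_n}\otimes u) : H^2_n(\cle) \to \clh_K$ with $\operatorname{ran}\widetilde\Pi = \cls$, $\widetilde\Pi M_{z_i} = M_{z_i}\widetilde\Pi$, and $\widetilde\Pi(x) = \widetilde\Pi(x \cdot \mathbf 1) = \Pi(u x) = \Theta x$ for every constant $x \in \cle \subset H^2_n(\cle)$ (using that $\cle$ is the constant functions, which form the generating wandering subspace of $H^2_n(\cle)$, carried to $\clw = \Theta\cle$).

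The final step is to identify $\widetilde\Pi$ with $M_\Theta$. Since $\widetilde\Pi$ intertwines $M_z \otimes I_\cle$ on $H^2_n(\cle)$ with $M_z$ on $\clh_K$ componentwise, the multiplier characterization recalled just before Definition \ref{K-inner} (namely Corollary 4.3 in \cite{JS2}) shows that $\widetilde\Pi = M_\Psi$ for some $K$-multiplier $\Psi \in \clm(H^2_n\otimes\cle, \clh_K)$. Evaluating on reproducing kernels, or simply on constants and using $\widetilde\Pi x = \Theta x$ for all $x \in \cle$ together with the intertwining relation to pin down $\Psi$ on all monomials $z^{\bm k} x$, one gets $\Psi = \Theta$ almost everywhere, hence $\Theta$ is a $K$-multiplier and $M_\Theta = \widetilde\Pi$ is a partial isometry; in particular $\|M_\Theta\| \le 1$, i.e.\ $\Theta$ is a contractive multiplier. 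The main obstacle I anticipate is the bookkeeping in this last identification: one must be careful that $\Theta$ is defined only as a function on $\mathbb B^n$ (not a priori a multiplier), so the matching "$\Psi = \Theta$" should be done by comparing the Taylor coefficients of $\Psi x$ and of the formal power series of $\Theta x$ — equivalently by checking $\langle \widetilde\Pi x, z^{\bm k}\eta\rangle = \langle \Theta x, z^{\bm k}\eta \rangle$ for a dense set of $\eta$, which follows from the wandering/intertwining structure. Everything else is a routine assembly of the dilation and invariant-subspace results already proved.
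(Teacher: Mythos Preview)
Your approach is essentially the paper's: form $\cls = \bigvee_{\bm k} M_z^{\bm k}(\Theta\cle)$, represent it by a partially isometric multiplier $\hat\Theta$ (your $\Pi$) via Theorem \ref{inv}, use Theorem \ref{inv-tuple} to match the wandering subspaces $\Theta\cle = \hat\Theta\clf$ by a unitary $U:\cle\to\clf$, and conclude $\Theta(\z)=\hat\Theta(\z)U$, so that $M_\Theta$ factors through the contractive $M_{\hat\Theta}$.

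There is one genuine overstep. You claim that $\widetilde\Pi = \Pi\circ(I_{H^2_n}\otimes u)$, and hence $M_\Theta$, is a \emph{partial isometry}. This is false in general: already for the Bergman space on $\mathbb D$, Bergman-inner functions are typically not partially isometric multipliers. The error is in the assertion that ``$\Pi$ restricted to $H^2_n(\clf)$ \ldots\ is a partial isometry'': since $\ker\Pi$ is $M_z$-invariant, the initial space $(\ker\Pi)^\perp$ is only $M_z^*$-invariant, so $z^{\bm k}\clf$ need not lie inside it. What survives, and what the paper actually asserts, is merely that $\widetilde\Pi$ is \emph{contractive}, which is immediate because it is the composition of an isometry $I_{H^2_n}\otimes u$ with a partial isometry $\Pi$. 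Adjusting this one word fixes your argument.

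Two minor points. First, the codomain of $\Pi$ should be $\clh_K\otimes\cle_*$, not $\clh_K$. Second, the paper's identification step is a touch cleaner than yours: from $\Theta(\z)x=\hat\Theta(\z)Ux$ for $x\in\cle$ one gets $\Theta(\z)f(\w)=\hat\Theta(\z)(Uf)(\w)$ for every $f\in H^2_n(\cle)$, and evaluating at $\z=\w$ yields $\Theta f=\hat\Theta(Uf)\in\clh_K\otimes\cle_*$ directly, without first invoking the multiplier characterization to name a $\Psi$ and then arguing $\Psi=\Theta$.
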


\NI \textsf{Proof.} Let $\Theta : \mathbb{B}^n \raro \clb(\cle, \cle_*)$ be a
$K$-inner function. Then $\clw = \Theta \cle \subset \clh_K \otimes \cle_*$ is a
generating wandering subspace for the restriction of $M_z \in \clb(\clh_K \otimes \cle_*)^n$
to the closed invariant subspace
\[
\cls = \bigvee_{\bm{k} \in \mathbb N^n} (M_z^{\bm{k}} \clw) \subset \clh_K \otimes \cle_*.
\]
By Theorem \ref{inv} and the remarks preceding Definition \ref{K-inner}, there is a partially
isometric $K$-multiplier $ \hat{\Theta} \in \clm(H^2_n \otimes \hat{\cle}, \clh_K \otimes \cle_*)$
such that $\cls$ is the range of the induced multiplication operator $M_{\hat{\Theta}}$. Define
$ \clf = (\ker M_{\hat{\Theta}})^{\perp} \cap \hat{\cle}$. A
straightforward application of Proposition \ref{u-wandering} and Theorem \ref{inv-tuple}
yields that
\[
M_{\hat{\Theta}}: \clf \raro \clw
\]
is a unitary operator. Since also $M_{\Theta}: \cle \raro \clw$ is a unitary operator, it follows
that there is a unitary operator $U: \cle \raro \clf$ such that $\Theta(z) = \hat{\Theta}(z)U$ for
all $\z \in \mathbb B^n$.
For each function $f \in H^2_n \otimes \cle \subset \clo(\mathbb B^n,\cle)$, it follows that
\[
\Theta(\z)f(\w) = \hat{\Theta}(\z)Uf(\w)
\]
for all $\z,\w \in \mathbb B^n$.  Evaluating this identity for $\z = \w$, we obtain that
$\Theta f = \hat{\Theta}Uf$ for all $f \in H^2_n(\cle)$. Since $H^2_n(\cle) \rightarrow H^2_n(\hat{\cle})$,
$f \mapsto Uf$, is isometric and since $\hat{\Theta}  \in \clm(H^2_n \otimes \hat{\cle}, \clh_K \otimes \cle_*)$
 is a contractive $K$-multiplier, it follows that also
$\Theta$ is a contractive $K$-multiplier.
\qed

In the scalar case $\cle = \cle_* = \mathbb C$ the preceding theorem implies that each $K$-inner function
$\Theta : \mathbb B^n \rightarrow \clb(\mathbb C) \cong \mathbb C$ satisfies the estimates
\[
1 = \| \Theta \|_{\clh_K} \leq \| \Theta \|_{\clm(H^2_n,\clh_K)} \| 1 \|_{H^2_n} \leq 1.
\]
Hence the norm of $\Theta$, and also of each scalar multiple of $\Theta$, as an element in $\clh_K$ coincides with
its norm as a multiplier from $H^2_n$ to $\clh_K$. We apply this observation to a natural class of examples.

A polynomial $p = \sum_{\bm{k}\in\mathbb N^n} a_{\bm{k}} z^{\bm{k}}$ is called {\it quasi-homogeneous} if there
are a tuple $\bm{m} = (m_1, \ldots ,m_n)$ of positive integers $m_i$ and an integer $\ell \geq 0$ such that
$\sum^n_{i=1} m_i k_i = \ell$ for all $\bm{k} \in \mathbb N^n$ with $a_{\bm{k}} \neq 0$. In this case
$p$ is said to be $\bm{m}$-quasi-homogeneous of degree $\ell$. Let us denote by $R_{\bm{m}}(\ell)$ the
set of all $\bm{m}$-quasi-homogeneous polynomials of degree $\ell$.

\begin{Corollary}\label{quasi-homogeneous}
Suppose that $\clh_K$ is a contractive analytic Hilbert space on $\mathbb B^n$ such that the
monomials $z^{\bm{k}}$ $(\bm{k} \in \mathbb N^n)$ form an orthogonal basis of $\clh_K$. Let
$p \in \mathbb C[z_1, \ldots , z_n]$ be a quasi-homogeneous polynomial. Then
\[
\| p \|_{\clh_K} = \| p \|_{\clm(H^2_n,\clh_K)}.
\]
If $\| p \|_{\clh_K} = 1$, then $p$ is a $K$-inner function.
\end{Corollary}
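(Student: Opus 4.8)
The plan is to reduce the statement to the scalar case of Theorem~\ref{inner-multiplier} by identifying, inside each space $R_{\bm m}(\ell)$ of quasi-homogeneous polynomials, a scalar multiple of a $K$-inner function that has the same $\clh_K$-norm and the same multiplier norm as the given $p$. The key structural fact I would exploit is that, because the monomials form an orthogonal basis of $\clh_K$, the weights $\|z^{\bm k}\|^2_{\clh_K}$ are determined by $\bm k$, and the multiplier norm of a quasi-homogeneous polynomial is governed entirely by its action across the ``graded'' pieces of $H^2_n$; multiplication by an $\bm m$-quasi-homogeneous polynomial of degree $\ell$ shifts $z^{\bm j}$ to the piece $R_{\bm m}(\ell)\cdot z^{\bm j}$, and these pieces are mutually orthogonal both in $H^2_n$ and in $\clh_K$.

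First I would normalize: given a quasi-homogeneous $p$ with $\|p\|_{\clh_K} = c > 0$, set $\Theta = p/c$, so $\|\Theta\|_{\clh_K} = 1$. The first substantive step is to verify that $\Theta$ is then a $K$-inner function in the sense of Definition~\ref{K-inner}: the normalization gives $\|\Theta x\|_{\clh_K} = |x|$ for $x \in \mathbb C$, and the orthogonality condition $\Theta \perp M_z^{\bm k}\Theta$ for $\bm k \neq \bm 0$ follows because $M_z^{\bm k}\Theta = z^{\bm k} p/c$ is again quasi-homogeneous but of strictly larger degree $\ell + \sum m_i k_i > \ell$ (here I use $m_i \geq 1$), hence lies in a different homogeneous component and is orthogonal to $\Theta$ since monomials of different multidegree are orthogonal in $\clh_K$. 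This proves the second assertion of the corollary outright, and by Theorem~\ref{inner-multiplier} it also gives $\|\Theta\|_{\clm(H^2_n,\clh_K)} \leq 1$.

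Next, to get the full norm identity for general $p$ (not necessarily of norm $1$), I would combine two inequalities. On one hand, $\|p\|_{\clh_K} = \|p \cdot 1\|_{\clh_K} \leq \|p\|_{\clm(H^2_n,\clh_K)}\|1\|_{H^2_n} = \|p\|_{\clm(H^2_n,\clh_K)}$, which is trivial. On the other hand, using the normalization $\Theta = p/\|p\|_{\clh_K}$ and the bound $\|M_\Theta\| \leq 1$ just obtained, we get $\|M_p\| = \|p\|_{\clh_K}\,\|M_\Theta\| \leq \|p\|_{\clh_K}$, so $\|p\|_{\clm(H^2_n,\clh_K)} \leq \|p\|_{\clh_K}$. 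The two inequalities together give the claimed equality; the only subtlety is the degenerate case $p = 0$, which is immediate. Note that this argument even shows the multiplier norm equals the $\clh_K$-norm of the single vector $p\cdot 1$, a phenomenon special to quasi-homogeneous polynomials.

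The step I expect to be the main obstacle is verifying cleanly that $M_\Theta$ really does map $H^2_n$ into $\clh_K$ and is bounded --- that is, that $\Theta$ is a $K$-multiplier at all --- so that Theorem~\ref{inner-multiplier} applies; but this is exactly the content of that theorem once $K$-innerness is established, so the difficulty is fully absorbed there. The remaining care is bookkeeping: confirming that the orthogonal-basis hypothesis is what makes distinct homogeneous components of $\clh_K$ orthogonal (so that the wandering condition holds), and checking that $\bm m$ having all entries positive is precisely what forces the degree to strictly increase under multiplication by $z^{\bm k}$ with $\bm k \neq \bm 0$. No deep new estimate is needed; the corollary is genuinely a specialization of Theorem~\ref{inner-multiplier} once the normalization is in place.
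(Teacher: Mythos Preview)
Your proposal is correct and follows essentially the same route as the paper: normalize $p$, verify $K$-innerness by observing that $z^{\bm k}p$ is $\bm m$-quasi-homogeneous of strictly larger degree (so lies in an orthogonal graded piece of $\clh_K = \bigoplus_\ell R_{\bm m}(\ell)$), and then invoke Theorem~\ref{inner-multiplier} together with the trivial lower bound $\|p\|_{\clh_K} \le \|p\|_{\clm(H^2_n,\clh_K)}$ to obtain the norm equality. Your write-up merely spells out in more detail what the paper compresses into a reference to the remarks following Theorem~\ref{inner-multiplier}.
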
 

\NI \textsf{Proof.} Suppose that $p \in R_{\bm{m}}(\ell)$ is $\bm{m}$-quasi-homogeneous of degree $\ell$
with $\| p \|_{\clh_K} = 1$. Then $z^{\bm{k}} p $ is  $\bm{m}$-quasi-homogeneous of degree 
$\ell + \sum^n_{i=1} m_i k_i$ for $\bm{k} \in \mathbb N^n$. Since by hypothesis
\[
\clh_K = \oplus_{\ell} R_{\bm{m}}(\ell),
\]
it follows that $p$ is a $K$-inner function. The remaining assertions follow from the remarks
following Theorem \ref{inner-multiplier}.
\qed

If $\Theta : \mathbb{B}^n \raro \clb(\cle, \cle_*)$ is a $K$-inner function, then
$\clw = \Theta \cle \subset \clh_K \otimes \cle_*$ is a closed subspace which is the
generating wandering subspace for $M_z$ restricted to $\cls = \bigvee_{\bm{k} \in \mathbb N^n}M_z^{\bm{k}}\clw$.
Hence in the setting of Corollary \ref{quasi-homogeneous} each closed $M_z$-invariant 
subspace $\cls = \bigvee_{\bm{k} \in \mathbb N^n}M_z^{\bm{k}}p \subset \clh_K$ generated by a 
quasi-homogeneous polynomial $p$ is generated by the wandering subspace $\clw = \mathbb C p = 
\cls \ominus \sum^n_{i=1} M_{z_i} \cls$.

Corollary \ref{quasi-homogeneous} applies in particular to the functional Hilbert spaces $H(K_{\lambda})$
$(\lambda \geq 1)$ with reproducing kernel $K_{\lambda}(\bm{z},\bm{w}) = 
(1 - \sum^n_{i=1} z_i\overline{w}_i)^{-\lambda}$, since these spaces satisfy all hypotheses for
$\clh_K$ contained in Corollary \ref{quasi-homogeneous}.

\begin{Remark}\label{K-multipliers}
{\rm It is well known (cf. Corollary 3.3 in \cite{W}) that the unit sphere $\partial \mathbb B^n$ is
contained in the approximate point spectrum $\sigma_{\pi}(M_z,H^2_n)$ of the multiplication
tuple $M_z \in \clb(H^2_n)^n$. Since the approximate point spectrum satisfies an analytic
spectral mapping theorem (see Section 2.6 in \cite{EP} for the relevant definitions
and the spectral mapping theorem), in dimension $n > 1$}
\[
0 \in p(\sigma_{\pi}(M_z,H^2_n)) = \sigma_{\pi}(M_p,H^2_n)
\]
{\rm for each homogeneous polynomial $p$ of positive degree. Hence, for each such polynomial, the
subspace $pH^2_n \subset H^2_n$ is non-closed. An elementary argument, using the fact that the
inclusion $H^2_n \subset H(K_{\lambda})$ is continuous, shows that also $pH^2_n \subset H(K_{\lambda})$
is not closed. It follows that in dimension $n > 1$ there is no chance to show that
$K_{\lambda}$-inner multipliers have the expansive multiplier property proved in \cite{O3} for
operator-valued Bergman inner functions on the unit disc.}
\end{Remark}

By combining Theorem \ref{inv} (or Theorem 3.2 in \cite{JS2}), Theorem
\ref{comp-ca} and Corollary \ref{comp-aa}
we obtain the following characterization of invariant subspaces of
vector-valued contractive analytic Hilbert spaces.

\begin{Theorem}\label{inv-rkhs}
Let $\clh_K$ be a contractive analytic Hilbert space over $\mathbb{B}^n$ and let
$\cle_*$ be an arbitrary Hilbert space. Then a closed subspace $\cls \subset
\clh_K \otimes \cle_*$ is invariant for
$M_z \otimes I_{\cle_*}$ if and
only if there exists a Hilbert space $\cle$ and a partially
isometric $K$-multiplier $\Theta \in \clm(H^2_n \otimes \cle,
\clh_{K} \otimes \cle_*)$ with
\[
\cls = \Theta H^2_n(\cle).
\]
If $\cls = \tilde{\Theta} H^2_n(\tilde{\cle})$ is another
representation of the same type,
then there exists a partial isometry $V :\cle \raro \tilde{\cle}$
such that
\[
\Theta(\z) = \tilde{\Theta}(\z) V \quad \quad (\z \in \mathbb{B}^n).
\]
Furthermore, if $\cls = \Theta_c H^2_n(\cle_c)$ is a canonical
representation of $\cls$ in the sense of Remark \ref{canonical-sub},
then
\[
\Theta_c(\z) = \Theta(\z) V_c \quad \quad \quad (\z \in
\mathbb{B}^n)
\]
for some isometry $V_c : \cle_c \raro \cle$.
\end{Theorem}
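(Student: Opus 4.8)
The plan is to reduce everything to the operator-tuple results already proved, via the dictionary between $K$-multipliers and intertwining partial isometries established in the remarks preceding Definition \ref{K-inner}. First I would observe that $M_z \otimes I_{\cle_*}$ on $\clh_K \otimes \cle_*$ is a pure commuting row contraction, since $\clh_K$ is a contractive analytic Hilbert space and purity passes to the ampliation (one has $P^m_{M_z \otimes I}(I) = P^m_{M_z}(I) \otimes I \to 0$ in SOT). Hence Theorem \ref{inv} applies with $T = M_z \otimes I_{\cle_*}$ and $\clh = \clh_K \otimes \cle_*$: a closed subspace $\cls$ is invariant if and only if there is a Hilbert space $\cle$ and a partial isometry $\Pi : H^2_n(\cle) \raro \clh_K \otimes \cle_*$ with $\Pi (M_{z_i} \otimes I_{\cle}) = (M_{z_i} \otimes I_{\cle_*}) \Pi$ and $\cls = \Pi H^2_n(\cle)$. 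By the description of the $K$-multiplier space recalled before Definition \ref{K-inner}, such an intertwining bounded operator $\Pi$ is precisely $M_{\Theta}$ for some $\Theta \in \clm(H^2_n \otimes \cle, \clh_K \otimes \cle_*)$, and $\Pi$ being a partial isometry is by definition $\Theta$ being a partially isometric multiplier. This gives the first equivalence, with $\cls = \Theta H^2_n(\cle)$.

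For the uniqueness statement, suppose $\cls = \Theta H^2_n(\cle) = \tilde{\Theta} H^2_n(\tilde{\cle})$ with $\Theta, \tilde{\Theta}$ partially isometric $K$-multipliers. Translating back, $M_{\Theta}$ and $M_{\tilde{\Theta}}$ are partial isometries from $H^2_n(\cle)$, resp. $H^2_n(\tilde{\cle})$, onto $\cls$, each intertwining $M_z$ with $M_z \otimes I_{\cle_*}$. Corollary \ref{comp-aa} (applied to the pure commuting row contraction $M_z \otimes I_{\cle_*}$) yields a partial isometry $V : \cle \raro \tilde{\cle}$ with $M_{\Theta} = M_{\tilde{\Theta}} (I_{H^2_n} \otimes V)$, i.e.\ $M_{\Theta} = M_{\tilde{\Theta}} M_V$ where $V$ is viewed as the constant multiplier. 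Since $M_{\Theta} = M_{\tilde{\Theta} V}$ and a multiplier is determined by the associated multiplication operator (evaluate $M_{\Theta}(\eta \otimes K_{\bm w})$ at a point, using the reproducing kernel), it follows that $\Theta(\z) = \tilde{\Theta}(\z) V$ for all $\z \in \mathbb B^n$.

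For the last assertion, take $\cls = \Theta_c H^2_n(\cle_c)$ to be a canonical representation in the sense of Remark \ref{canonical-sub}: this means $M_{\Theta_c}^*|_{\cls}$ is a \emph{minimal} dilation of $(M_z \otimes I_{\cle_*})|_{\cls}$. Given any other representation $\cls = \Theta H^2_n(\cle)$ with $\Theta$ a partially isometric $K$-multiplier, Theorem \ref{comp-ca} (with $\Pi = M_{\Theta_c}$, $\tilde\Pi = M_{\Theta}$) provides an isometry $V_c : \cle_c \raro \cle$ with $M_{\Theta} = M_{\Theta_c}(I_{H^2_n} \otimes V_c^*)$; equivalently, dualizing, $M_{\Theta_c} = M_{\Theta}(I_{H^2_n} \otimes V_c)$, which gives $\Theta_c(\z) = \Theta(\z) V_c$ on $\mathbb B^n$ by the same point-evaluation argument. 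The routine points to check are the pure-row-contraction property of the ampliation and the elementary fact that the passage between a multiplier and its multiplication operator is injective; the only place where any real content is invoked is in Corollary \ref{comp-aa} and Theorem \ref{comp-ca}, which already rest on the uniqueness of minimal dilations (Theorem \ref{u-dil}), so no genuine new obstacle arises here—the theorem is essentially a repackaging of Sections 2–4 in the multiplier language.
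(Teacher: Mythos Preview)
Your proposal is correct and follows exactly the approach the paper intends: the theorem is stated immediately after the remark that it follows ``by combining Theorem \ref{inv} (or Theorem 3.2 in \cite{JS2}), Theorem \ref{comp-ca} and Corollary \ref{comp-aa}'', and your argument carries out precisely this combination, together with the multiplier dictionary recalled before Definition \ref{K-inner}. The one step you label ``dualizing'' in the last paragraph is justified simply by right-multiplying $M_{\Theta} = M_{\Theta_c}(I_{H^2_n} \otimes V_c^*)$ by $I_{H^2_n} \otimes V_c$ and using $V_c^* V_c = I_{\cle_c}$; with that clarification the write-up is complete.
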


The first part of the preceding theorem for the particular case
$\clh_K = H^2_n$ is the Drury-Arveson space is a result of
McCullough and Trent \cite{MT}, which generalizes the classical
Beurling-Lax-Halmos theorem to the multivariable case.
The last part seems to be new even in the case of
the Drury-Arveson space.

By applying Theorem \ref{inv-tuple} we obtain a generalization
of a result of Olofsson (Theorem 4.1 in \cite{O2}) to a quite
general multlivariable setting. 

\begin{Theorem}\label{wandering-rkhs1}
Let $\clh_K$ be a contractive analytic Hilbert space on $\mathbb{B}^n$
and let $\cle_*$ be an arbitrary Hilbert space.
Let $\cls = \Theta H^2_n(\cle)$ be a closed $M_z$-invariant
subspace of $\clh_K \otimes \cle_*$ represented by a partially
isometric multiplier $\Theta \in \clm(H^2_n \otimes \cle,
\clh_{K} \otimes \cle_*)$ as in Theorem \ref{inv-rkhs}. Then 
$\Theta_0 : \mathbb B^n \rightarrow \clb(\clf,\cle_*), \Theta_0(z) = \Theta(z)|_{\clf}$,
where
\[
\clf = \{\eta \in \cle: M_{\Theta}^*M_{\Theta} \eta = \eta\},
\]
defines a $K$-inner function such that the wandering subspace
$\clw = \cls \ominus \sum^n_{i=1} M_{z_i} \cls$ of $M_z$ restricted to $\cls$ is given by
\[
\clw = \Theta_0 \clf.
\]
The wandering subspace $\clw$ is generating for $M_z|_{\cls}$ if and only if
\[
\cls = \overline{\Theta H^2_n(\clf)}.
\]
\end{Theorem}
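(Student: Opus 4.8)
The plan is to deduce everything from Theorem \ref{inv-tuple} applied to the partial isometry $\Pi = M_{\Theta} : H^2_n(\cle) \to \clh_K \otimes \cle_*$, which intertwines $M_z \otimes I_{\cle}$ and $M_z \otimes I_{\cle_*}$ componentwise. By that theorem, $\cls = M_{\Theta} H^2_n(\cle)$ is a closed $M_z$-invariant subspace, $\clw = \cls \ominus \sum_{i=1}^n M_{z_i} \cls$ is a wandering subspace for $M_z|_{\cls}$, and
\[
\clw = M_{\Theta}\big( (\ker M_{\Theta})^{\perp} \cap \cle \big).
\]
So the first step is to identify the subspace $(\ker M_{\Theta})^{\perp} \cap \cle$ of $\cle$ — meaning constant $\cle$-valued functions in $H^2_n(\cle)$ — with the set $\clf = \{\eta \in \cle : M_{\Theta}^* M_{\Theta} \eta = \eta\}$. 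This is elementary: for a partial isometry $A$, one has $\|A\eta\| \le \|\eta\|$ with equality iff $\eta \in (\ker A)^{\perp}$, and $\langle A^*A\eta, \eta\rangle = \|A\eta\|^2$, so $A^*A\eta = \eta$ iff $\|A\eta\| = \|\eta\|$ iff $\eta \perp \ker A$. Here $A = M_{\Theta}$ restricted to the constants $\cle \subset H^2_n(\cle)$, and since the embedding $\cle \hookrightarrow H^2_n(\cle)$ is isometric, $\|M_{\Theta}\eta\|_{\clh_K \otimes \cle_*} = \|\eta\|$ exactly on $\clf$. Hence $(\ker M_{\Theta})^{\perp} \cap \cle = \clf$ and $\clw = M_{\Theta}\clf = \Theta_0 \clf$, once we note $(M_{\Theta}\eta)(\z) = \Theta(\z)\eta = \Theta_0(\z)\eta$ for $\eta \in \clf$.

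Next I would verify that $\Theta_0 : \mathbb B^n \to \clb(\clf, \cle_*)$ is a $K$-inner function in the sense of Definition \ref{K-inner}. The norm condition $\|\Theta_0 \eta\|_{\clh_K \otimes \cle_*} = \|\eta\|_{\clf}$ for $\eta \in \clf$ is immediate from the definition of $\clf$ just established, and $\Theta_0 \eta = M_{\Theta}\eta \in \clh_K \otimes \cle_*$. The orthogonality condition $\Theta_0 \clf \perp M_z^{\bm{k}}(\Theta_0 \clf)$ for $\bm{k} \in \mathbb N^n \setminus \{0\}$ follows because $\Theta_0 \clf = \clw$ is a wandering subspace for $M_z|_{\cls}$, and $M_z^{\bm{k}}(\Theta_0 \clf) = M_z^{\bm{k}}\clw \subset \cls$, so the wandering property of $\clw$ gives exactly this orthogonality. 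Thus $\Theta_0$ is $K$-inner and $\clw = \Theta_0 \clf$.

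For the last equivalence: $\clw$ is generating for $M_z|_{\cls}$ means precisely $\cls = \bigvee_{\bm{k} \in \mathbb N^n} (M_z|_{\cls})^{\bm{k}}\clw = \bigvee_{\bm{k} \in \mathbb N^n} M_z^{\bm{k}}\clw = \clg_{M_z|_{\cls}, \clw}$. By Corollary \ref{generated_space} applied to $\Pi = M_{\Theta}$ (with $T = M_z$ on $\clh_K \otimes \cle_*$), we have $\bigvee_{\bm{k} \in \mathbb N^n} M_z^{\bm{k}}\clw = \overline{M_{\Theta} H^2_n(\clf)} = \overline{\Theta H^2_n(\clf)}$, where I use that $\clf = (\ker M_{\Theta})^{\perp} \cap \cle$ from the identification above. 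Since $\overline{\Theta H^2_n(\clf)} \subseteq \cls$ always holds, the condition that $\clw$ be generating is equivalent to $\cls = \overline{\Theta H^2_n(\clf)}$, as claimed. No step here presents a genuine obstacle; the only point requiring a little care is the spectral-type identification of $\clf$ with $(\ker M_{\Theta})^{\perp} \cap \cle$ and the observation that evaluating $(M_{\Theta}\eta)(\z)$ at a constant argument really does give $\Theta(\z)\eta$, so that $\Theta_0 = \Theta|_{\clf}$ is the honest restriction appearing in the statement.
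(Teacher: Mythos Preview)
Your proof is correct and follows exactly the same approach as the paper: identify $\clf$ with $(\ker M_{\Theta})^{\perp}\cap\cle$ via the partial-isometry identity $M_{\Theta}^*M_{\Theta}=P_{(\ker M_{\Theta})^{\perp}}$, and then invoke Theorem~\ref{inv-tuple} and Corollary~\ref{generated_space} with $\Pi=M_{\Theta}$ and $T=M_z\in\clb(\clh_K\otimes\cle_*)^n$. The paper's proof is a two-line sketch of precisely this argument; you have simply written out the details (including the explicit verification that $\Theta_0$ is $K$-inner) that the paper leaves implicit.
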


\NI \textsf{Proof.} Since $M_{\Theta}|_{(\ker M_{\Theta})^{\bot}}$ is an isometry and since
\[
\clf = (\ker M_{\Theta})^{\bot} \cap \cle,
\]
the result follows as an application of Theorem \ref{inv-tuple} and Corollary\ref{generated_space} with 
$T = M_z \in \clb(\clh_K \otimes \cle_*)^n$.
\qed

By using once again the fact that $M_{\Theta}$ is a partial isometry one
can reformulate the necessary and sufficient condition for $\clw$ to be a generating
wandering subspace for $M_z|_{\cls}$ given in Theorem \ref{wandering-rkhs1}.

\begin{Theorem}
In the setting of Theorem \ref{wandering-rkhs1} the space $\clw = \Theta \clf$ is a generating
wandering subspace for $M_z|_{\cls}$ if and only if
\[
(\ker M_{\Theta})^\perp \cap (H^2_n(\clf))^\perp = \{0\}.
\]
\end{Theorem}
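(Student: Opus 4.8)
The plan is to start from the characterization in Theorem \ref{wandering-rkhs1}, namely that $\clw = \Theta \clf$ is generating for $M_z|_{\cls}$ if and only if $\cls = \overline{\Theta H^2_n(\clf)}$, and to translate this into a condition on $\ker M_{\Theta}$ using that $M_{\Theta}$ is a partial isometry. Recall that $\clf = (\ker M_{\Theta})^{\perp} \cap \cle$, and by Corollary \ref{generated_space} (with $T = M_z$ and $\Pi = M_{\Theta}$) we have $\overline{\Theta H^2_n(\clf)} = \bigvee_{\bm k} M_z^{\bm k} \clw$, which is a closed $M_z$-invariant subspace of $\cls$. So the generating condition $\cls = \overline{\Theta H^2_n(\clf)}$ is equivalent to $\overline{M_{\Theta} H^2_n(\clf)} = \overline{M_{\Theta} H^2_n(\cle)}$.

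The key step is the following observation: since $M_{\Theta}$ is a partial isometry with initial space $(\ker M_{\Theta})^{\perp}$, it maps $(\ker M_{\Theta})^{\perp}$ isometrically onto $\cls = {\rm ran}\, M_{\Theta}$, and in particular $\overline{M_{\Theta} N} = M_{\Theta}\overline{N}$ for any subspace $N \subseteq (\ker M_{\Theta})^{\perp}$ (an isometry is a homeomorphism onto its range). Therefore $\overline{M_{\Theta} H^2_n(\clf)} = M_{\Theta} \overline{P_{(\ker M_{\Theta})^{\perp}} H^2_n(\clf)}$ — more precisely, I would argue that $\overline{M_{\Theta} H^2_n(\clf)} = \cls$ holds if and only if $H^2_n(\clf)$ is dense in $(\ker M_{\Theta})^{\perp}$ after projecting off $\ker M_{\Theta}$, i.e. if and only if $(\ker M_{\Theta})^{\perp} \subseteq \overline{\ker M_{\Theta} + H^2_n(\clf)}$. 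Taking orthogonal complements in $H^2_n(\cle)$, this says $\ker M_{\Theta} \supseteq (\ker M_{\Theta} + H^2_n(\clf))^{\perp} = \ker M_{\Theta}^{\perp\perp} \cap (H^2_n(\clf))^{\perp}$; but one must be careful because $\ker M_{\Theta}$ need not have $H^2_n(\clf)$ in its complement as a closed sum. The cleanest route is: $\overline{M_{\Theta}H^2_n(\clf)} = \cls$ $\iff$ $M_{\Theta}^*$ is injective on $\cls$ restricted appropriately $\iff$ the only vector in $H^2_n(\cle)$ orthogonal to both $\ker M_{\Theta}$ and $H^2_n(\clf)$ and lying in $(\ker M_{\Theta})^\perp$... — so I would instead directly compute: a vector $g \in \cls$ is orthogonal to $\overline{M_{\Theta}H^2_n(\clf)}$ iff $\langle M_{\Theta}f, g\rangle = 0$ for all $f \in H^2_n(\clf)$ iff $M_{\Theta}^* g \perp H^2_n(\clf)$; and since $M_{\Theta}^*g \in (\ker M_{\Theta})^{\perp}$, this holds for all such $g$ exactly when $(\ker M_{\Theta})^{\perp} \cap (H^2_n(\clf))^{\perp} = \{0\}$, using that $M_{\Theta}^*$ maps $\cls$ onto $(\ker M_{\Theta})^{\perp}$. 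This gives the stated equivalence.

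The main obstacle I expect is keeping the bookkeeping of orthogonal complements straight — specifically, verifying that $M_{\Theta}^*$ restricted to $\cls = {\rm ran}\,M_{\Theta}$ is a bijection onto $(\ker M_{\Theta})^{\perp}$ (true because $M_{\Theta}$ is a partial isometry), so that "$g \perp \overline{M_{\Theta}H^2_n(\clf)}$ for all $g \in \cls$ forces $g = 0$" translates correctly into "$(\ker M_{\Theta})^{\perp} \cap (H^2_n(\clf))^{\perp} = \{0\}$". Once that correspondence is nailed down, the proof is a two-line consequence of Theorem \ref{wandering-rkhs1} and Corollary \ref{generated_space}. I would also note in passing that $H^2_n(\clf) \subseteq \clf \oplus \sum_i M_{z_i}H^2_n(\cle)$ is not automatically inside $(\ker M_{\Theta})^{\perp}$, which is precisely why the intersection condition is nontrivial and why the theorem has content.
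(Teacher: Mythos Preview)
Your proposal is correct and follows essentially the same route as the paper. The paper establishes the identity $\cls \ominus \overline{\Theta H^2_n(\clf)} = \Theta\bigl[(\ker M_{\Theta})^\perp \cap (H^2_n(\clf))^\perp\bigr]$ via the inner-product computation $\langle f,h\rangle = \langle \Theta f,\Theta h\rangle$ for $f \in (\ker M_{\Theta})^\perp$ and $h \in H^2_n(\clf)$, which is exactly your computation $\langle M_{\Theta}^*g,h\rangle = \langle g,\Theta h\rangle$ read with $g = \Theta f$; both then invoke Theorem~\ref{wandering-rkhs1} to conclude.
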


\NI \textsf{Proof.} For $f \in (\ker M_{\Theta})^{\bot}$ and
$h \in H^2_n(\clf)$, the identity
\[
\langle f,h \rangle = \langle f, P_{(\ker M_{\Theta})^{\bot}} h \rangle
= \langle \Theta f, \Theta P_{(\ker M_{\Theta})^{\bot}} h \rangle
= \langle \Theta f, \Theta h \rangle
\]
holds. Using this observation one easily obtains the identity
\[
\Theta H^2_n(\cle) \ominus \overline{\Theta H^2_n(\clf)}  =
\Theta [(\ker M_{\Theta})^\perp \cap (H^2_n(\clf))^\perp].
\]
Since by Theorem \ref{wandering-rkhs1} the space $\clw = \Theta \clf$
is a generating wandering subspace for $M_z|_{\cls}$ if and only if
the space on the left-hand side is the zero space, the assertion follows. \qed

We conclude by giving an example which shows that in the multivariable setting,
even for the nicest analytic functional Hilbert spaces on $\mathbb B^n$, there
are $M_z$-invariant subspaces which do not possess a generating wandering 
subspace.

\begin{Example} {\rm For $a \in \mathbb B^n$, define $\cls_a = \{ f \in H^2_n : f(a) = 0 \}$.
For $a \neq 0$, the wandering subspace $\clw_a = \cls_a \ominus \sum^n_{i=1} M_{z_i} \cls_a$
for $M_z$ restricted to $\cls_a$ is one-dimensional (see Theorem 4.3 in \cite{GRS}). Hence, if $n > 1$, then 
the common zero sets
\[
Z(\cls_a) = \{ a \} \neq Z(\clw_a) = Z(\bigvee_{\bm{k} \in \mathbb N^n} z^{\bm{k}} \clw_a)
\]
of $\cls_a$ and the invariant subspace generated by $\clw_a$ are different. Thus,
for $n > 1$ and $a \neq 0$, the restriction of $M_z$ to $\cls_a$ has no generating wandering 
subspace. Since $z_1, \ldots ,z_n \in \cls_0 \ominus \sum^n_{i=1} M_{z_i} \cls_0$ and since
$\cls_0 = \bigvee \{ z^{\bm{k}} z_j : \bm{k} \in \mathbb N^n \; {\rm and} \; j = 1, \ldots ,n \}$,
we obtain that $\cls_0$ possesses the generating wandering subspace 
$\clw_0 = {\rm span} \{ z_1, \ldots , z_n \}$. By using Corollary 4.6 in \cite{GRS} one sees
that the above observations remain true if $H^2_n$ is replaced by the Hardy space $H^2(\mathbb B^n)$
or the Bergman space $L^2_a(\mathbb B^n)$. }

\end{Example}

\vspace{0.2in}

\NI\textit{Acknowledgement:} The research of the fourth author was
supported in part by NBHM (National Board of Higher Mathematics,
India) Research Grant NBHM/R.P.64/2014.


\begin{thebibliography}{99}

\vspace{1cm}


\bibitem{ARS}
A. Aleman, S. Richter and C. Sundberg, {\em Beurlings theorem for
the Bergman space}, Acta Math., 177 (1996), 275--310.

\bibitem{AE}
C. Ambrozie and J. Eschmeier, {\em A commutant lifting theorem for
analytic polyhedra}, in: Topological algebras, their applications
and related topics, 83-108, Banach Center Publ.67, Polish Acad. Sci.,
Warsaw, 2005.

\bibitem{Ar}
W. Arveson, {\em Subalgebras of $C^*$-algebras III: Multivariable
operator theory}, Acta Math. 181 (1998), 159--228.

\bibitem{BB1}
J. Ball and V. Bolotnikov, {\em A Beurling type theorem in weighted
Bergman spaces}, C. R. Math. Acad. Sci. Paris 351 (2013), no. 11--12,
433-436.

\bibitem{BB2}
J. Ball and V. Bolotnikov, {\em Weighted Bergman spaces:
shift-invariant subspaces and input/state/output linear systems},
Integral Equations Operator Theory 76 (2013), 301--356.

\bibitem{BB3}
J. Ball and V. Bolotnikov, {\em Weighted Hardy spaces: shift
invariant and coinvariant subspaces, linear systems and operator
model theory}, Acta Sci. Math. (Szeged) 79 (2013), 623--686.

\bibitem{BTV}
J. Ball, T. Trent and V. Vinnikov, {\em Interpolation and commutant
lifting for multipliers on reproducing kernel Hilbert spaces}, in:
Operator Theory and Analysis, OT 122, 89--138, Birkh\"{a}user, Basel, 2001.


\bibitem{EP}
J. Eschmeier and M. Putinar, {\em Spectral decompositons and analytic sheaves.}
London Math. Soc. Monographs, Clarendon Press, Oxford 1996.

\bibitem{GRS}
J. Gleason, St. Richter, and C. Sundberg {\em On the index of invariant subspaces in
spaces of analytic functions of several complex variables}, J. Reine Angew. Math. 587
(2005) 49--76.


\bibitem{PH}
P. Halmos, {\em Shifts on Hilbert spaces}, J. Reine Angew. Math. 208
(1961) 102--112.


\bibitem{HH}
H. Hedenmalm, {\em A factorization theorem for square
area-integrable analytic functions}, J. Reine Angew. Math. 422
(1991) 45--68.

\bibitem{HKZ}
H. Hedenmalm, B. Korenblum and K. Zhum, {\em Theory of Bergman spaces}, 
Springer, New York, 2000..


\bibitem{MT}
S. McCullough and T. Trent, {\em Invariant subspaces and
Nevanlinna-Pick kernels}, J. Funct. Anal. 178 (2000),
226--249.


\bibitem{NF}
B. Sz.-Nagy and C. Foias, {\em Harmonic Analysis of Operators on
Hilbert Space.} North-Holland, Amsterdam-London, 1970.

\bibitem{O1}
A. Olofsson, {\em A characteristic operator function for the class
of $n$-hypercontractions}, J. Funct. Anal., 236 (2006), 517--545.

\bibitem{O2}
A. Olofsson, {\em Operator-valued Bergman inner functions as
transfer functions}, Algebra i Analiz 19 (2007), 146--173;
translation in St. Petersburg Math. J. 19 (2008), 603--623.

\bibitem{O3} A. Olofsson, {\em An expansive multiplier property for
operator-valued Bergman inner functions}, Math. Nachr. 282 (2009),
1451--1460.

\bibitem{Pop} G. Popescu, {\em Isometric dilations for infinite sequences
of noncommuting operators}, Trans. Amer. Math. Soc. 316 (1989), 523--536.

\bibitem{JS3}
J. Sarkar, {\em Operator theory on symmetrized bidisc}, Indiana
Univ. Math. J. 64 (2015), 847--873.

\bibitem{JS1}
J. Sarkar, {\em An invariant subspace theorem and invariant
subspaces of analytic reproducing kernel Hilbert spaces. I,} J.
Operator Theory 73 (2015), 433--441.

\bibitem{JS2}
J. Sarkar, {\em An invariant subspace theorem and invariant
subspaces of analytic reproducing kernel Hilbert spaces. II,}
to appear in Complex Analysis and Operator Theory.


\bibitem{SS1}
S. Shimorin, {\em Wold-type decompositions and wandering subspaces
for operators close to isometries}, J. Reine Angew. Math. 531
(2001), 147--189.

\bibitem{SS2}
S. Shimorin, {\em On Beurling-type theorems in weighted $l^2$ and
Bergman spaces}, Proc. Amer. Math. Soc. 131 (2003), 1777--1787.

\bibitem{W}
M. Wernet, {\em On semi-Fredholm theory and essential normality},
Dissertation, Universit\"at des Saarlandes, 2014.

\end{thebibliography}
\end{document}